\documentclass[11pt,letterpaper]{amsart}
\pdfoutput=1 
\usepackage{etex}
\usepackage[latin1]{inputenc}
\usepackage{multirow}
\usepackage{amsmath}
\usepackage{subfigure}
\usepackage{amsfonts}
\usepackage{amsthm}
\usepackage{array,nicefrac}
\usepackage{amsthm}
\usepackage{tikz}	
\usetikzlibrary{arrows}
\usetikzlibrary{patterns} 
\usepackage{multirow}
\usepackage{amssymb}
\usepackage{amscd, amsfonts}
\usepackage[dvips]{epsfig}
\usepackage{verbatim}
\usepackage[usenames,dvipsnames]{pstricks}
\usepackage{epsfig}
\usepackage{pst-grad} 
\usepackage{pst-plot} 
\usepackage{pdftricks}
\usepackage{pstricks}
\usepackage{epsf}
\usepackage{multirow}
\usepackage{amssymb}
\usepackage{verbatim} 
\usepackage{graphicx}
\usepackage[all]{xy}
\usepackage{lscape}
\usepackage{hyperref}

\setlength{\textwidth}{6.6in} \setlength{\textheight}{8.6in}
\hoffset=-0.6truein \voffset=-0.1truein
\baselineskip=7.0mm
\setlength{\baselineskip}{1.09\baselineskip}

\newtheorem{thm}[equation]{Theorem}

\newtheorem{lemma}[subsection]{Lemma}
\numberwithin{equation}{section}

\newtheorem{defn}[subsection]{Definition}

\theoremstyle{definition}
\newtheorem{rmk}[subsection]{Remark}
\newtheorem{ex}[subsection]{Example}

\begin{document}
\title{A note on graded hypersurface singularities}
\date{\today}
\author[Gallardo]{Patricio Gallardo}
\address{Department of Mathematics\\Stony Brook University\\ Stony Brook, NY 11794}
\email{pgallardo@math.sunysb.edu}
\maketitle{}
\begin{abstract}
For a weighted quasihomogeneous two dimensional hypersurface  singularity, we define a smoothing with unipotent monodromy and an isolated graded normal singularity. We study the natural weighted blow up of both the smoothing and the surface.  In particular, we describe our construction for the quasihomogeneous singularities of type I, the 14 unimodal exceptional singularities  and we relate it to their stable replacement.
\end{abstract}

\section{Introduction}
 
We are interested in generalizing the procedure for calculating the stable replacement of a plane curve  singularity  to the case of an isolated  surface singularity $S_0 \subset \mathbb{C}^3$ with a good $\mathbb{C}^*$-action. In the one dimensional case, the stable replacement for a generic smoothing involves a base change $t^m \to t$ follow it by a weighted blow up. This construction was studied by Hassett  \cite{hassett} for the case of toric and quasitoric plane curve singularities (see Remark \ref{hst}).  The purpose of this note is to discuss the generalization of that setting for the 14 exceptional unimodal surface singularities and the quasihomogeneous singularities of type I.  

Our approach uses the natural grading associated to a surface singularity $S_0$ with a $\mathbb{C}^*$-action and its weighted blow up, also known as the Steifer partial resolution.    Indeed, we consider two perspectives for constructing our one dimensional families of surfaces: First, we impose  the conditions that the smoothing $ X \to \Delta$ of $S_0$ is unipotent and that it has a graded isolated hypersurface singularity.  The former condition is of particular interest because the monodromy theorem implies semistable families have unipotent monodromy  (see \cite[pg. 106]{morrison1984clemens}).   The second type of smoothings are constructed by  considering the possible ways  the orbifolds singularities associated to the minimal resolution of $S_0$  can be extended to isolated threefold canonical singularities. 

The main results of this note are Theorem \ref{thm1} which considers surface singularities of type I; and Theorem \ref{thm2} which considers the 14 exceptional unimodal singularities.  In particular, the relationship between unimodal singularities and K3 surfaces is  analogous to the relationship between  the cusp singularity on a plane curve and the elliptic tail that appears in the context of moduli of curves.    In work in progress, we are generalizing this picture for minimal elliptic singularities which is a significant larger family of non log canonical singularities; these singularities appear naturally on moduli constructions of surface of general type
 \cite[Prop. 4.12]{gallardo2013git}.   On Section \ref{hm}, we provide examples  illustrating their behaviour which is  different to the unimodal case.  

The idea of comparing the partial resolution of a smoothing with that of an appropriate hypersurface section has been used recently in a more general setting by Wahl \cite{wahl2011log}. The relationship between these K3 surfaces, the unimodal singularities and their unipotent smoothings appears in several instances (see \cite{dolgachev1996mirror},  \cite{geoK3},  \cite{prokhorov}). We must warn our readers that developments from the work of Pinkham \cite{pinkham1978deformations}  and Wahl \cite{wahl1988deformations} are not included in this note. 
Moreover,  Theorem \ref{thm2} appears more explicitly in the recent work of J.  Rana \cite{Rana} in Fuchsian singularities where it is conjectured and partially proved; her work overlap with ours but her methods are different.  Our applications to moduli problems will appear somewhere else. 

\subsection{Acknowledgement} 
I am grateful to my advisor, Radu Laza, for his guidance and help.  I have benefited from discussions with J. Tevelev, J. Rana, K. Ascher, B. Hassett, N. Durgin and I. Dolgachev.  I am also grateful to the CIE and the Mathematics department at Stony Brook University for their support.  Our presentation follows closely that of Wahl \cite{wahl2011log}  and Hassett  \cite{hassett} from which I benefited greatly.

\section{Preliminaries}

\subsection{On weighted quasihomogeneous hypersurfaces}

Let $S_0$ be the germ of a surface singularity with a $\mathbb{C}^*$-action defined by 
$
\sigma (t, (x_1, \ldots, x_n)) = (t^{w_0}x_1, \ldots, t^{w_n}x_n)
$. 
If the integers $w_i  >0$, then we say that $\sigma$ is a good $\mathbb{C}^*$-action.  We will always suppose the action is good and our singularities are isolated. In particular, the case of  two dimensional hypersurface singularities were classified by Orlik, Wagreich \cite[3.1]{orlik1971isolated} and Arnold \cite{arnold}. Indeed, any of those singularities have a topologically trivial deformation into one of the following polynomials 
\cite[Cor. 3.6]{xu1989classification}:
\begin{align*}
 \text{Class I}  & = x^{p_0}+y^{p_1}+z^{p_2}  &   & 
 \text{Class II} = x^{p_0}+y^{p_1}+yz^{p_2}
\\
\text{Class III} &=   x^{p_0}+y^{p_1}z+yz^{p_2}  & &  
\text{Class IV}  =  x^{p_0}+y^{p_1}z+xz^{p_2}
\\
\text{Class V} &= x^{p_0}y+y^{p_1}z+xz^{p_2}  & & \\
\text{Class VI} &=x^{p_0}+xy^{p_1}+xz^{p_2}  +y^az^b  & &  ({p_0}-1)({p_0}a+{p_2} b)={p_0}{p_1}{p_2}   \\
\text{Class VII} &=x^{p_0}y+xy^{p_1}+xz^{p_2}  +xz^a+y^az^b & &  ({p_0}-1)({p_0}a+{p_2} b) =b({p_0}{p_1}-1) 
\end{align*}
where $p_i \geq 2$. In this paper, we suppose the equation defining $S_0$ belongs to one of the previous classes (see Remark \ref{notunf}) and that $S_0$ has good properties (see Remark \ref{gd}).  For us $ \tilde{w}=(w_0,w_1,w_2)$ denotes the unique set of integer weights for which the surface singularity is quasihomogeneous,  $gcd(w_0,w_1,w_2)=1$, and the weighted multiplicity $\deg_{\tilde{w}}(S_0)$ reaches its smallest integer value. 
\begin{rmk}
In an abuse of notation we will not distinguish between the germ of a hypersurface singularity and a generic hypersurface of large degree with such a singularity.   In general, we take  
$S_0 \subset \mathbb{C}^3$ to be a surface singularity while $X \subset \mathbb{C}^4$ denotes a threefold one. 
\end{rmk}
\begin{rmk}\label{notunf}
There is not a uniform notation among the references.  For example, the class V in \cite{arnold} is labelled as  IV in \cite{xu1989classification}. We used the notation from \cite{xu1989classification}.
\end{rmk}
\begin{ex}\label{wgt}
For singularities of class I, the explicit expressions for the weights and their weighed multiplicity  are:
\begin{align*}\label{w}
\tilde{w}
& =
\left(
\frac{p_1p_2}{gcd(p_0p_1,p_1p_2,p_0p_2)},
\frac{p_0p_2}{gcd(p_0p_1,p_1p_2,p_0p_2)}, 
\frac{p_0p_1}{gcd(p_0p_1,p_1p_2,p_0p_2)}
\right)
\\
deg_w(f)
&=  
\frac{p_0p_1p_2}{gcd(p_0p_1,p_1p_2,p_0p_2)}.
\end{align*}
The expressions for the other classes are increasingly cumbersome, so we will not display them here.
\end{ex}
By the work of Orlik and Wagreich, it is known that the dual graph of the minimal resolution of $S_0$ is star shaped.  The central curve is a curve whose genus is determined by the weights $\tilde{w}$  while the other curves in the branches of the minimal resolution are rational  (see \cite[Thm 2.3.1, Thm 3.5.1]{orlik1971isolated}).  The contraction of these branches generates a set of cyclic quotient singularities naturally associated to $S_0$.
\begin{defn} 
Let $S_0$ be a surface singularity with a $\mathbb{C}^*$-action, we denote by $\mathcal{B}_{S_0}$  the set of cyclic quotient singularities supported on the central curve  $(S_0 \setminus 0) / \mathbb{C}^*$ and obtained by contracting the branches  on the minimal resolution of $S_0$.
\end{defn}
\begin{rmk} \label{dol}
Let $S_0$ be a normal Gorenstein surface singularity with a good $\mathbb{C}^*$- action. Its coordinate ring is a graded algebra 
$
A= \bigoplus_{k=0} ^{\infty} A_k
$ 
where $A_0 = \mathbb{C}$ and the singularity is defined by its maximal ideal.  Following Dolgachev, there exist a simply connected Riemann surface $C$, a discrete cocompact group 
$\Gamma \subset Aut(C)$ and an appropriate line bundle $\mathcal{L}$ such that 
$A_k = H^0(C, \mathcal{L}^k)^{\Gamma}$.   Moreover, there is a divisor $D_0 \subset C$ and points $p_i \in C$ such that: 
$$  
A_k = L \left( kD_0 + \sum_{i=1}^{r} \Big\lfloor k \frac{\alpha_i-\beta_i}{\alpha_i} \Big\rfloor \right) 
$$
where $L(D_R)$ is the space of meromorphic functions with poles bounded by $D_R$. The set of numbers $ b:=\deg(D_0)+r$, $(\alpha_i,\beta_i)$ and $g(C)$ are known  as the orbit invariants of $S_0$.  Furthermore, there is a number $R$, known as the exponent of $S_0$ that satisfies the relationships: 
\begin{align*}
R \left( -b+ \sum_{i=1}^{r} \frac{\beta_i}{\alpha_i}  \right)
=
\left( - \deg(K_C) -r + \sum_{i=1}^{r} \frac{1}{\alpha_i} \right)
&; &
R \beta_i \equiv 1 \mod (\alpha_i) \;\; \forall i=1, \ldots r
\end{align*}
with this notation the cyclic singularities are
$\mathcal{B}_{S_0} =\left\{ \frac{1}{\alpha_i}(1,\beta_i)  \right\}$. 
\end{rmk}
In general there are  explicit formulas for finding the singularities in $\mathcal{B}_{S_0}$ 
(see \cite[Sec 3.3]{orlik1971isolated}).  Next, we describe two examples that will be relevant in the following sections.
\begin{ex}\label{bf}
Let $S_0$ be the singularity induced by $f(x,y,z)=x^{p_0}+y^{p_1}+z^{p_2}$  with 
$p_i \geq 2$.  Then, the weights $w_i$ are as on  Example \ref{wgt}, $\alpha_k=gcd(w_i,w_j)$, 
$R=\deg_{\tilde{w}}(f)-\sum_i w_i$ and  $\mathcal{B}_{S_0} $ has 
cyclic quotient singularities of type
$
\frac{1}{\alpha_k}(1,\beta_k)
$ 
where $w_k\beta_k \equiv -1 \mod gcd(w_j,w_i) $.  In the minimal resolution of $S_0$, let 
$\hat{C}$ be the proper transform of $(S_0 \setminus 0) / \mathbb{C}^*$.  
For example, consider the $E_{20}$ singularity which associated equation is $x^{2}+y^3+z^{11}$ then
\begin{align*}
\mathcal{B}_{E_{20}} = \left\{
\frac{1}{2}(1,1), \frac{1}{3}(1,2), \frac{1}{11}(1,9)
\right\}
\end{align*}
For all the classes of  quasihomogeneous singularities, the self intersection of $\hat{C}$   is given by
(see \cite[Thm 3.6.1]{orlik1971isolated}) 
$$
-\hat{C}^2 =\frac{ \deg_{\tilde{w}}(f)}{w_0w_1w_2} + \sum_{k=1}^{r} \frac{\beta_k}{\alpha_k}
$$
although in general $\alpha_k$ may be different to $gcd(w_j,w_i) $.  
\end{ex}
\begin{defn} (notation as on Remark \ref{dol}) A surface singularity $S_0$ is called Fuchsian if $C$ is the upper half plane and $\mathcal{L}=K_C$.  In particular, the 14 exceptional unimodal singularities are Fuchsian ones. 
\end{defn}
\begin{ex}\label{fsh}
Let $S_0$ be a Fuchsian singularity, then 
$$
\mathcal{B}_{S_0} = \left\{  \frac{1}{\alpha_i}(1,1)   \right\}
$$
where $\alpha_i$ is an orbit invariant as on Remark \ref{dol} (for a list of the possible $\alpha_i$ see \cite[Sec 1, Table 1]{ebeling2003poincare}).
\end{ex}
\begin{defn} Given a cyclic quotient singularity $T_1:=\frac{1}{r}(1,d_1) $. We say that 
$ \frac{1}{r}(1,d_2)$ is  dual to $T_1$ if the isolated threefold cyclic quotient singularity 
$$
\frac{1}{r}(1,d_1,d_2)
$$
is canonical. Let $\mathcal{B}_{S_0} = \{ T_1, \ldots T_n \} $ be the set of cyclic quotient singularities associated to  $S_0$.  We say  the set of cyclic quotient singularities
$\mathcal{\hat{B}}_{S_0} = \{ \hat{T}_1, \ldots , \hat{T}_n \} $ is dual to 
$ \mathcal{B}_{S_0}  $ if $\hat{T}_i$ is dual to $T_i$ for every $i$.
\end{defn}
\begin{rmk}\label{exdual} We use the Reid-Tate criterion \cite[pg 105]{smmp} and the classification due to Morrison and Stevens
\cite[Thm 2.4]{morrison1984terminal},  \cite[Thm 3]{morrison1985canonical}
for finding the dual sets  $\mathcal{\hat{B}}_{S_0}$.  The isolated singularity 
$\frac{1}{r}(1,d_2) $ is the dual $\frac{1}{r}(1,d_1)$ if and only if one of the following equations holds (the three first imply the threefold singularity is terminal, the last one implies it is Gorenstein):
\begin{align*}
1+d_1 \equiv 0 \mod ( r ) &; & 1+d_2 \equiv 0 \mod ( r ) &; &d_1+d_2 \equiv 0 \mod ( r ) 
&; & d_1+d_2 \equiv -1 \mod ( r ) 
\end{align*}
if $r=7,9$ we also must also consider two, somehow exceptional, canonical cyclic singularities:
\begin{align*}
\frac{1}{9}(2,8,14) & & \frac{1}{7}(1,9,11)
\end{align*}
\end{rmk}\label{dfc}
\begin{ex} Let $S_0$ be a Fuchsian singularity as on Example \ref{fsh}, then we have the dual sets $\mathcal{\hat{B}}_{S_0}= \left\{  T_i   \right\}$ where  $T_i$ is either 
$
\frac{1}{\alpha_i}(1,\alpha_i-1)
$
or
$
\frac{1}{\alpha_i}(1,\alpha_i-2)
$.
\end{ex}
\begin{ex} \label{notunq}
Let $\mathcal{B}_{E_{20}}$ be as on Example \ref{bf}. Then, two possible dual sets are: 
\begin{align*}
(\hat{\mathcal{B}}_{E_{20}})_{1} = \left\{
\frac{1}{2}(1,1), \frac{1}{3}(1,1), \frac{1}{11}(1,2)
\right\}
& &
(\hat{\mathcal{B}}_{E_{20}})_{2} = \left\{
\frac{1}{2}(1,1), \frac{1}{3}(1,1), \frac{1}{11}(1,10)
\right\}
\end{align*}
\end{ex}
\section{On the smoothing of surface singularities} \label{wbup}

In this section, we construct two types of one dimensional families of surfaces motivated by a similar construction due to Hassett \cite{hassett} in the context of plane curves singularities (see Remark \ref{hst}). Our definitions spring from two questions:
\begin{enumerate}
\item  What is the base change need it for calculating the stable replacement of a non log canonical surface singularity? 
\item Given the union of two surfaces with singularities $\mathcal{B}_{S_0}$ and $\mathcal{\hat{B}}_{S_0}$, is it possible to find a $\mathbb{Q}$-Gorenstein smoothing of them?
\end{enumerate}
Our partial advances in these questions yield the  Theorems \ref{thm1} and  \ref{thm2}.  Next, we describe  the weighted blow ups that are used  in our work.

Let $X:=\operatorname{Spec}(A)$ be an isolated graded normal singularity with a good $\mathbb{C}^*$-action.  
Then, $A$ can be written as the quotient of a graded polynomial ring $\mathbb{C}[x_1, \ldots x_n]$ where the variables $x_i$ have weights $w_i >0$ (\cite[1.1]{orlik1971isolated}). 
The $\mathbb{C}^*$-action determines a weighted filtration whose blow up is known as the Steifer partial resolution of $X$  (for more details see \cite[Sec. 1]{wahl2011log}). This resolution depends of the grading, and in our case it is induced by  the weighted blow up $\pi_{\mathbf{w}}$ of $\mathbb{C}^n$ with respect the weights $\textbf{w}$ which we will denoted as $Bl_{\mathbf{w}}\mathbb{C}^n$.  In this case, $\pi_{\mathbf{w}}$  amounts to blowing up  $\mathbb{C}^n$ along the ideal
$$
\left(x_1^{d/{w_1}}, \ldots,x_n^{d/{w_n}}  \right)
$$
where $d=lcm(w_0, \ldots, w_n)$. The exceptional divisor associated to $\pi_{\mathbf{w}}$ is 
$E_{\mathbf{w}} := \mathbb{P}(w_0, \ldots ,w_n)$ and the exceptional divisor on the proper transform $\tilde{X}$ of $X$ is induced by $\tilde{X} \cap E_{\mathbf{w}}$.
 On our applications,  $X \subset \mathbb{C}^4$ is a smoothing of $S_0$ with an isolated singularity that has a $\mathbb{C}^*$-action.   By restricting
$\pi_{\mathbf{w}}$ to its central fiber we have a weighted blow up of $ S_0 \subset \mathbb{C}^3$ with weights $\tilde{w}$ and exceptional divisor  $E_{\tilde{w}} = \mathbb{P}(w_0,w_1,w_2)$. The final configuration is given by (see Figure \ref{big}):
\begin{displaymath}
\xymatrix{
C:=S_1 \cap E_{\tilde{w}} \subset  S_1 +E_{\tilde{w}} \subset Bl_{\tilde{w}}\hat{\mathbb{C}}^3
 \ar@{->}[r] \ar[d]_{\pi_{\tilde{w}}} 
& \tilde{X}  + E_{\pi_{\mathbf{w}}}  \subset  
Bl_{\mathbf{w}}\hat{\mathbb{C}}^4 \ar[d]^{\pi_{\mathbf{w}}} 
\\
S_0 \subset \hat{\mathbb{C}}^3
\ar@{->}[r]  & 
 X  \subset \hat{\mathbb{C}}^4 
}
\end{displaymath}

The central fiber $\tilde{X}|_0$  decomposes as the union of  two surfaces $S_1$ and $S_T$. The surface $S_1$ is the proper transform of $S_0$ and $S_T \subset \mathbb{P}(w_0,w_1,w_2,w_3)$ is the exceptional surface contained in the proper transform of $\tilde{X}$.  
The exceptional divisor $E_{\tilde{w}}$ is not contained in the threefold $\tilde{X}$, but rather it intersects $\tilde{X}$ along the exceptional curve $C \subset S_1$ which satisfies 
$$
C:= S_1 \cap E_{\pi_{\tilde{w}}} = S_1 \cap S_T.
$$
The singularities on $S_1$ are the ones on 
$\mathcal{B}_{S_0}$.  The singularities on $S_T$ depend of the weights $\mathbf{w}$. Note that $\tilde{X}$ may have non isolated singularities. Our applications spring from the fact that, for the appropriate cases, the new one dimensional family of surfaces $\tilde{X} \to \Delta$ have  at worst canonical singularities.  
\begin{figure}[h!]
\begin{center}
\begin{tikzpicture}[line cap=round,line join=round, >=triangle 45, x=1cm,y=1cm, scale=6]
\baselineskip=5pt
\hsize=6.3truein
\vsize=8.7truein
\tikzstyle{every node}=[font=\small]
\definecolor{b}{rgb}{0.6,0.2,0}
\definecolor{black}{rgb}{0.25,0.25,0.25}
\draw [color=b]   (0,0)-- (0.2,0.1);
\draw [color=b ](0.2,0.1)-- (0.1,0.26);
\draw  [color=b]  (0.1,0.26)-- (0,0);
\fill [color=red] (0.1,0.26) circle (0.3pt);
\pattern[pattern=vertical lines, pattern color=b, nearly transparent]  (0,0)-- (0.2,0.1) --(0.1,0.26);  
\draw [color=b] (0,0.4)-- (0.2,0.5);
\draw [color=b] (0.2,0.5)-- (0.2,0.7);
\draw [color=b] (0,0.6)-- (0,0.4);
\pattern[pattern=vertical lines, nearly transparent, pattern color=b]   (0,0.6)--(0,0.4)-- (0.2,0.5) --(0.2,0.7);  
\draw [blue] (0.2,0.7)-- (0.2,0.9);
\draw [blue] (0.2,0.9)-- (0,0.8);
\draw [blue] (0,0.8)-- (0,0.6);
\pattern[pattern=dots, nearly transparent, pattern color=blue] (0.2,0.7)-- (0.2,0.9)-- (0,0.8) -- (0,0.6);  
\draw [color=red] (0,0.6)-- (0.2,0.7);
\filldraw[red] (0.066,0.632) circle (0.2pt);
\filldraw[red] (0.133,0.665) circle (0.2pt);
\node at (0.1,0.69) { $C$};
\node at (-0.05,0.50) { $S_{1}$};
\draw [->] (0.1,0.38) -- (0.1,0.31); \node at (0.17,0.35) { $\pi_{\tilde{w}}$};
\draw [->] (1.2,0.38) -- (1.2,0.31); \node at (1.3,0.35) { $\pi_{\textbf{w}}$};
\draw [->] (0.3,0.4) -- (0.7,0.4);
\node at (-0.05,0.75) { $E_{\pi_{\tilde{w}}}$};
\node at (-0.05,0.1) { $S_{0}$};
\begin{scope}[shift={(0.5,0)}]
\draw (0.4,0)-- (0.4,0.2); 
\draw (0.4,0.2 )-- (0.6,0.3); 
\draw [dashed](0.4,0) --(0.6,0.1);
\draw [dashed] (0.6,0.1) -- (0.6,0.3);
\draw (0.8,0)-- (0.8,0.2); 
\draw (0.8,0.2)-- (1,0.3);
\draw (0.8,0)-- (1,0.1);
\draw (1,0.3)-- (1,0.1);
\draw [color=b] (0.6,0.01)-- (0.8,0.11);
\draw [color=b] (0.8,0.11)-- (0.7,0.27);
\draw [color=b] (0.7,0.27)-- (0.6,0.01);
 \pattern[pattern=vertical lines, ,pattern color=b, nearly transparent]  (0.6,0.01)-- (0.8,0.11)-- (0.7,0.27) -- (0.6,0.01);  
\fill [color=red] (0.7,0.27) circle (0.3pt);
\node at (0.35,0.1) { $X$};
\node at (0.35,0.5) { $\tilde{X}$};
\node at (0.9,0.7) { $S_{T}$};
\node at (0.7,0.55) { $S_{1}$};
\node at (0.6,0.9) { $E_{\pi_{\textbf{w}}}$};
%
\draw (0.4,0.2) .. controls (0.6,0.21) .. (0.8,0.2);
\draw (0.4,0) .. controls (0.6,0.01) .. (0.8,0);
\draw [dashed] (0.6,0.1) .. controls (0.8,0.11) .. (1,0.1);
\draw (0.6,0.3) .. controls (0.8,0.31) .. (1,0.3);
\draw (0.4,0.4)-- (0.4,0.6)-- (0.6,0.7);
\draw (0.6,0.7)-- (0.61,0.7);
\draw [dashed] (0.61,0.7)-- (0.82,0.7);
\draw [dashed] (0.6,0.7) --(0.6,0.5);
\draw [dashed](0.4,0.4)-- (0.6,0.5);
\draw [dashed] (0.6,0.5) .. controls (0.6,0.51).. (1,0.5);
\draw (0.8,0.4)-- (0.8,0.7);
\draw (0.8,0.7)-- (1,0.8);
\draw (1,0.8)-- (1,0.5);
\draw (1,0.5)-- (0.8,0.4);
\draw (0.4,0.4) ..controls (0.6,0.4) .. (0.8,0.4);
\draw [blue, dashed] (0.81,0.7) -- (0.81,0.9);
\draw [blue] (0.61,0.6)- - (0.61,0.8);
\draw [b] (0.61,0.4)- - (0.61,0.6);
\draw [blue] (0.61,0.8)- - (0.81,0.9);
\draw [blue](0.61,0.8)-- (0.71,0.8);
\draw [blue](0.71,0.8)-- (0.71,0.7);
\draw [blue, dashed] (0.81, 0.7) -- (0.91,0.8); 
\pattern[ pattern=dots, nearly transparent, pattern color=blue]  (0.61,0.6)-- (0.81, 0.7) -- (0.81,0.9)-- (0.61,0.8) ;
\draw [blue](0.61,0.6)-- (0.71,0.7); 
\draw [blue] (0.71,0.7)-- (0.91,0.8);
\pattern[pattern=crosshatch, nearly transparent, pattern color=red] (0.61,0.6) -- (0.81,0.7)--  (0.91,0.8) --(0.71,0.7);
\draw [blue](0.71,0.8)-- (0.91,0.9); 
\draw [blue](0.81,0.9)-- (0.91,0.9);
\draw [blue](0.91,0.9)-- (0.91,0.8);
\draw [dashed, red] (0.61,0.6)-- (0.81,0.7); 
\draw (0.91,0.8)-- (1,0.8); 
\draw (0.4,0.6)-- (0.61,0.6); 
\draw [b,dashed](0.81,0.7)-- (0.81,0.5); 
\draw [b,dashed ](0.81,0.5)-- (0.61,0.4); 
\pattern[pattern=vertical lines, nearly transparent, pattern color=b]   (0.61,0.6) -- (0.81,0.7) -- (0.81,0.5) -- (0.61,0.4);
\filldraw[red] (0.74,0.665) circle (0.2pt);
\filldraw [red] (0.68,0.632) circle (0.2pt);
\end{scope}
\end{tikzpicture}
\caption{Weighted blow up setting}\label{big}
\end{center}
\end{figure}
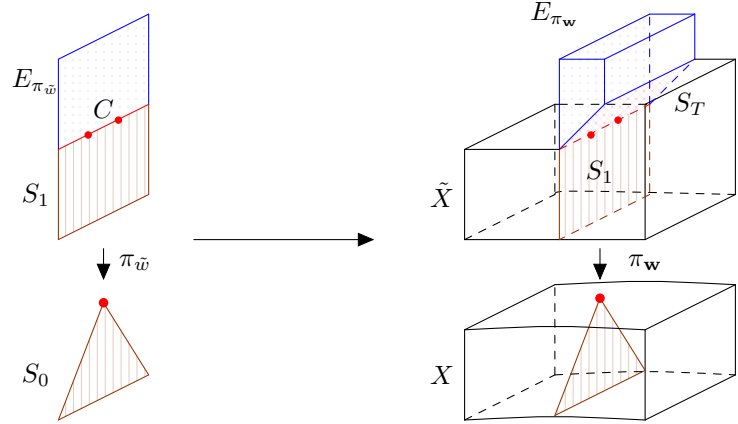
\begin{lemma}\label{lcS1}
The pair $(S_1, C)$ has at worst log canonical singularities and $K_{S_1}+C$ is  
$\pi_{\tilde{w}}$-ample.
\end{lemma}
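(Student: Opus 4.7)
The plan is to pass to the ambient weighted blow up $\pi_{\tilde w}\colon Bl_{\tilde w}\mathbb{C}^3\to\mathbb{C}^3$, compute $K_{S_1}+C$ by two successive adjunctions, verify log canonicity through a local toric analysis at the cyclic quotient singularities of $S_1$, and finally deduce $\pi_{\tilde w}$-ampleness from the fact that the exceptional curve $C$ has negative self intersection on $S_1$.

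The first step is to establish the key discrepancy formula. On $Bl_{\tilde w}\mathbb{C}^3$ one has the standard identity $K_{Bl_{\tilde w}\mathbb{C}^3}=\pi_{\tilde w}^{\ast}K_{\mathbb{C}^3}+(w_0+w_1+w_2-1)\,E_{\tilde w}$ and the pullback formula $\pi_{\tilde w}^{\ast}S_0=S_1+d\,E_{\tilde w}$ with $d=\deg_{\tilde w}(f)$. Combining adjunction on $S_1$ with $E_{\tilde w}|_{S_1}=C$ and adjunction on the hypersurface $S_0\subset\mathbb{C}^3$ yields
\[
K_{S_1}+C\;=\;\pi_{\tilde w}^{\ast}K_{S_0}+(w_0+w_1+w_2-d)\,C,
\]
which encodes both properties we have to prove.

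For log canonicity I would work locally. Away from $\mathcal{B}_{S_0}$ the surface $S_1$ is smooth and $C$ is a smooth irreducible branch by the Orlik--Wagreich description of the central curve, so $(S_1,C)$ is simple normal crossing there. At each point $p_i\in\mathcal{B}_{S_0}$ of type $\tfrac{1}{\alpha_i}(1,\beta_i)$ the good $\mathbb{C}^{\ast}$-action makes the pair locally toric, with $C$ given as the image of one of the coordinate axes under the cyclic cover $\mathbb{C}^2\to\mathbb{C}^2/\mu_{\alpha_i}$. Since $\mu_{\alpha_i}$ acts freely in codimension one, the pair pulls back to the snc pair $(\mathbb{C}^2,\{x=0\})$, which is log canonical; descending through the quotient shows $(S_1,C)$ is log canonical at $p_i$.

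For $\pi_{\tilde w}$-ampleness, note that $\pi_{\tilde w}|_{S_1}$ contracts only the irreducible curve $C$ to the origin of $S_0$, so relative ampleness is equivalent to $(K_{S_1}+C)\cdot C>0$. Intersecting the formula from the first step with $C$ and applying the projection formula, which gives $\pi_{\tilde w}^{\ast}K_{S_0}\cdot C=0$, one finds
\[
(K_{S_1}+C)\cdot C\;=\;(w_0+w_1+w_2-d)\,C^2.
\]
Here $C^2<0$ by Mumford's negative definiteness of the intersection form on exceptional curves of a birational morphism to a normal surface, and $w_0+w_1+w_2-d<0$ under the standing assumption that $S_0$ is a non log canonical singularity of the listed classes (this is precisely the statement that the discrepancy of $C$ over $S_0$ is at most $-1$), so the product is strictly positive. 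The main obstacle is checking the sign of $w_0+w_1+w_2-d$ uniformly across the listed classes; for class I the weight formulas in Example~\ref{wgt} reduce this to the hyperbolic inequality $\tfrac{1}{p_0}+\tfrac{1}{p_1}+\tfrac{1}{p_2}<1$, and the remaining classes follow from analogous weighted-degree inequalities recorded in the Orlik--Wagreich classification.
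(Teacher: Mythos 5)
Your argument is correct in its essentials, but it takes a genuinely different route from the paper: the paper does not prove this lemma at all, it simply cites \cite[Cor. 3.10]{ishiimodels} and \cite[2.3.2]{wahl1988deformations}, where the corresponding statement about the Seifert (Steifer) partial resolution of a normal singularity with good $\mathbb{C}^*$-action is established. What your self-contained computation buys is transparency about exactly which hypotheses are used: the discrepancy formula $K_{S_1}+C=\pi_{\tilde w}^{*}K_{S_0}+(w_0+w_1+w_2-d)C$ together with Mumford negativity reduces $\pi_{\tilde w}$-ampleness to the single inequality $d>w_0+w_1+w_2$, i.e.\ $R>0$ in the notation of Remark \ref{dol}, and the local-toric description at the points of $\mathcal{B}_{S_0}$ gives log canonicity (in fact purely log terminality) of the pair. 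Three points deserve care. First, the adjunction $K_{S_1}+C=\bigl(K_{Bl_{\tilde w}\mathbb{C}^3}+S_1+E_{\tilde w}\bigr)\big|_{S_1}$ in principle carries a ``different'' correction term because the ambient weighted blow up has quotient singularities along $E_{\tilde w}$; it vanishes here only because $S_1$ meets that singular locus in finitely many points, and you should say so. Second, the sign condition $w_0+w_1+w_2-d<0$ is genuinely a hypothesis: it fails for the log canonical members of class I (e.g.\ $x^2+y^3+z^6$, where $d=\sum w_i$ and $K_{S_1}+C\equiv_{\pi_{\tilde w}}0$), so the lemma as you prove it requires the standing restriction to non log canonical $S_0$, which the paper leaves implicit; your parenthetical ``discrepancy at most $-1$'' should read ``strictly less than $-1$'' for the same reason. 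Third, the step ``relative ampleness is equivalent to $(K_{S_1}+C)\cdot C>0$'' uses that the contracted curve $C$ is irreducible and that $K_{S_1}+C$ is $\mathbb{Q}$-Cartier on the $\mathbb{Q}$-factorial surface $S_1$; both hold here but are worth recording. None of these is a gap in the non log canonical setting the paper actually works in.
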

\begin{proof}
This statement is proof on  \cite[Cor. 3.10]{ishiimodels}) and
\cite[2.3.2]{wahl1988deformations}.  
\end{proof}
The singularities of $S_T$ are determined by the weighted projective space $\tilde{E}_{\mathbf{w}}$.
\begin{lemma}
\label{sgst} Let $S_T  \subset \mathbb{P}(w_0,w_1,w_2,w_3)$ be a generic hypersurface
(see Remark \ref{gd}), then its singular locus is obtained by intersecting $S_T$ with   the edges  $P_iP_j$ and with the vertex $P_i$. In the first case $S_T$ has 
$ \lfloor gcd(w_j,w_i) \deg_{\textbf{w}}(f)/w_jw_i \rfloor$ cyclic quotient singularities of type 
$$
\frac{1}{gcd(w_i,w_j)}(1,c_{ij}) 
$$ 
where $c_{ij}$ is the solution of $w_l-w_kc_{ij} \equiv 0 \mod (gcd(w_i,w_j)), \; \; k < l$.  The other singularities of $S_T$ are induced by the vertex $P_i$ and they depend on the intersection between $S_T$ and the vertex.
\end{lemma}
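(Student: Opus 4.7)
The plan is to stratify the ambient $\mathbb{P}:=\mathbb{P}(w_0,w_1,w_2,w_3)$ by its torus-orbit structure and then combine a weighted Bertini argument with a monomial count. Under the well-formed hypothesis on the $w_i$ (see Remark \ref{gd}), $\mathbb{P}$ is smooth on its dense torus orbit, carries a transversal cyclic quotient singularity of type $\frac{1}{\gcd(w_i,w_j)}(w_k,w_l)$ along the open part of each edge $P_iP_j=\{x_k=x_l=0\}$ (with $\{k,l\}$ the complement of $\{i,j\}$), and has an isolated three-dimensional quotient singularity $\frac{1}{w_i}(w_j,w_k,w_l)$ at each vertex $P_i$. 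Any singular point of $S_T$ must therefore either lie on this stratified locus or arise from a failure of transversality to the smooth stratum, and I will rule out the latter.

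Next I would invoke the weighted Bertini theorem: for $d:=\deg_{\mathbf{w}}(f)$ sufficiently positive a generic $S_T\in|\mathcal{O}_\mathbb{P}(d)|$ is smooth on the smooth locus of $\mathbb{P}$ and meets every singular stratum transversely in the stacky sense. Transversality along an open edge implies that $S_T$ inherits the transversal surface model $\frac{1}{\gcd(w_i,w_j)}(w_k,w_l)$ at each intersection point. Well-formedness gives $\gcd(w_k,\gcd(w_i,w_j))=1$, so $w_k$ is a unit modulo $\gcd(w_i,w_j)$; multiplying the first entry by $w_k^{-1}$ puts the model in the normalized form $\frac{1}{\gcd(w_i,w_j)}(1,c_{ij})$ with $c_{ij}\equiv w_l\,w_k^{-1}\pmod{\gcd(w_i,w_j)}$, equivalently $w_l - w_k c_{ij}\equiv 0\pmod{\gcd(w_i,w_j)}$, and the convention $k<l$ fixes a canonical representative. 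The number of such intersections then follows from a monomial count: the edge $P_iP_j$ is the weighted projective line $\mathbb{P}(w_i,w_j)$ on which $f$ restricts to a generic section of $\mathcal{O}(d)$, and counting pairs $(a,b)\in\mathbb{Z}^{2}_{\geq 1}$ with $aw_i+bw_j=d$ (the monomials $x_i^{a}x_j^{b}$ whose presence in $f$ prevents vanishing at either vertex) gives exactly $\lfloor \gcd(w_i,w_j)\,d/(w_iw_j)\rfloor$ simple zeros on the open part of the edge for generic coefficients.

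At each vertex $P_i$ the ambient model is the three-fold $\frac{1}{w_i}(w_j,w_k,w_l)$, and the resulting singularity of $S_T$ (when $P_i\in S_T$) depends on the leading form of $f$ in the local chart at $P_i$; this is intrinsically non-uniform, which is what the statement records by leaving the vertex contribution qualitative. The main obstacle is the weighted Bertini step: one has to ensure that $|\mathcal{O}_\mathbb{P}(d)|$ separates points and tangent vectors along every singular stratum of $\mathbb{P}$, a condition on $d$ relative to the weights $w_i$ and their pairwise gcds, which is where the genericity hypothesis of Remark \ref{gd} is used. Once transversality is secured, the identification of the transversal model and the floor-function count are essentially combinatorial.
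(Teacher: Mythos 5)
Your argument is correct and follows essentially the same route as the paper's proof: both reduce the problem, via well-formedness of the ambient weighted projective space and transversality of $S_T$ to its singular strata, to the standard computation of the transversal quotient model along each edge and the point count of $S_T\cap P_iP_j$ (which the paper simply cites from Yonemura and Iano--Fletcher \cite[I.6.4, I.7.1]{wps} rather than carrying out). The only remark worth making is that your ``weighted Bertini'' step, which you flag as the main obstacle, is not actually needed as a separate theorem here: quasismoothness of $S_T$ is already a standing hypothesis in Remark \ref{gd}, and by \cite[I.5]{wps} it directly implies that the singularities of $S_T$ are exactly those induced by the cyclic quotient singularities of $\mathbb{P}(w_0,w_1,w_2,w_3)$.
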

\begin{proof}
By hypothesis $E_{\mathbf{w}}$ is well formed, then its singular locus has codimension 2 and it is supported on the edges $P_iP_j$.    By quasismoothness of $X$, the proof reduces to describe the intersection of  $S_T$ with  $\operatorname{Sing}(E_{\mathbf{w}})$. This is a standard calculation on weighted projective spaces see for example  \cite[Lemma 4.1]{yonemura} or \cite[I.7.1]{wps}; a useful Lemma for counting the points on  $ S_T \cap P_iP_j$ is \cite[I.6.4]{wps}. 
\end{proof}

\begin{rmk}\label{gd} 
We suppose the associated equation to the singularity is  \emph{non degenerated with respect to the Newton polytope}. Therefore, the log canonical, canonical, and minimal model of our singularities can be constructed by a subdivision of the dual fan of their respective Newton polytope (see  \cite{ishiimodels}).  We \emph{will suppose the surface
$S_T$  is quasismooth}. Then, its singularities  are induced only by the cyclic quotient singularities of $\mathbb{P}(w_0,w_1,w_2,w_3)$ (see \cite[I.5]{wps}).
We will also suppose that $S_T$ is well formed, so we can  apply the adjunction formula to find its canonical class (for details see \cite[I.3.10]{wps}).  Finally, we \emph{suppose that $S_0$ and $X$ are generic enough with respect to their weights}. So the singularities of $S_T$ depend only on the weights $\mathbf{w}$.  In particular, for each $i$, the equation contains a term of the form $x_i^n$ or $x_i^mx_k$ with a nonzero coefficient. 
\end{rmk}
\begin{ex} (see \cite[pg. 72]{watanabe1998three}) Let $E_{12}$ be a minimal elliptic surface singularity with associated equation $(x^2+y^3+z^7=0)$. Consider the smoothing 
$$
x^2+y^3+z^7 +a_1t^{42}+a_2xyzt=0  
\; \text{ with }\; 
[a_1:a_2] \in \mathbb{P}^1.
$$ 
If $a_1 \neq 0$, then $S_T \subset \mathbb{P}(21,14,6,1)$  is a $K3$ surface of weighted degree $42$ with $A_1+A_2+A_6$ singularities. On other hand if $a_1=0$, then the exceptional divisor is a rational surface on $\mathbb{P}(21,14,6,1)$ with a $T_{2,3,7}$ singularity. We highlight that this weighted blow up is the simultaneous canonical modification of these  singularities (see \cite[Ex 3.4]{ishiicw})
\end{ex}
\begin{rmk}[Motivation, see Hassett \cite{hassett}]\label{hst} Let $C_0$ be the germ of an isolated reduced plane curve singularity with the same topological type as  
$(f(x,y)=x^p+y^q=0)$. Let  $S$ be the  smoothing  defined by 
$$
(f(x,y) +t^{lcm(p,q)}=0) \to \operatorname{Spec} (\mathbb{C}[[t]] ),
$$
and let $\pi_{\mathbf{w}}$ be the induced weighted blow up with respect the weights 
$$
(w_0,w_1,w_3)= \left( \frac{q}{gcd(p,q)} , \frac{p}{gcd(p,q)} ,1 \right).
$$
a similar setting to ours is  (the symbol $\mathbb{\hat{C}}^n$ is to indicate we are working locally). 
\begin{displaymath}
\xymatrix{
C_1 \cap \mathbb{P}(w_0,w_1) \subset  C_1 +\mathbb{P}(w_0,w_1) \subset Bl_{\tilde{w}}\hat{\mathbb{C}}^2
 \ar@{->}[r] \ar[d]_{\pi_{\tilde{w}}} 
& \tilde{S}  + \mathbb{P}(w_0,w_1,1) \subset  
Bl_{\mathbf{w}}\hat{\mathbb{C}}^3 \ar[d]^{\pi_{\mathbf{w}}} 
\\
C_0 \subset \hat{\mathbb{C}}^3
\ar@{->}[r]  & 
S \subset \hat{\mathbb{C}}^3
}
\end{displaymath}
The central fiber $\tilde{S}|_0$  decomposes as the union of  two curves $C_1$ and $C_T$. The curve $C_1$ is the normalization of $C_0$ and  
$C_T = \tilde{S} \cap \mathbb{P}(w_0,w_1,1)$ is the exceptional curve contained on the proper transform of $S$.   The key point is that all the fibers of $\tilde{S}$ are Deligne-Mumford stable \cite[Thm 6.2]{hassett}
Therefore, this weighted blow up give us the local stable reduction for those plane curve singularities. 
\end{rmk}

\subsection{Smoothing associated to the monodromy}\label{sthmono}
Let  $X_0 \to  \operatorname{Spec} \left( \mathbb{C}[[\tau]] \right)$ be a generic one dimensional smoothing of $S_0$,  then its analytical form is $f(x,y,z)=t$ where $f(x,y,z)$ is an equation defining $S_0$.  We construct another smoothing $X \to \Delta$ of $S_0$ by taking a base change $t^m \to t$.  Our base change is singled out by the monodromy theorem which implies  a smoothing $X \to \Delta$ with a semistable family $Y$ dominating it,  
$Y \to X \to \Delta$, has unipotent local monodromy.  
\begin{defn}
Let $X_0 \to \operatorname{Spec}(\mathbb{C}[[\tau]])$ as before.  The unipotent base change 
is the one given by $\tau \to t:=\tau^{m}$ where $m$ is the minimum integer such that:
$$
\Delta:=\operatorname{Spec} \left( \mathbb{C}[[t]] \right) \to \operatorname{Spec}(\mathbb{C}[[\tau]])
$$ 
 induces a family $ X:=X_0 \times_{\Delta_0} \Delta \to \Delta$ with  an unique quasihomogeneous singularity and  unipotent monodromy.
\end{defn}
Varchenko proved  that if $S_0:=(f(x,y,z)=0))$    is non degenerate with respect to its Newton polyope, then the characteristic polynomial of the monodromy of $f(x,y,z)$ at the origin depends only on its associated weights.

\begin{lemma}\label{smtg}
Let $S_0$ be a quasihomogeneous non degenerated singularity, then its   unipotent base change is induced by its weighted degree $\deg_{\tilde{w}}(f)$.
\end{lemma}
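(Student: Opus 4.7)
The plan is to identify the geometric monodromy $T$ of $f$ with a finite-order diffeomorphism coming from the intrinsic $\mathbb{C}^*$-action, and then to show that $T$ has order exactly $d := \deg_{\tilde{w}}(f)$. Since $f$ is quasihomogeneous of weighted degree $d$, the scaling $(x_0,x_1,x_2) \mapsto (\lambda^{w_0}x_0, \lambda^{w_1}x_1, \lambda^{w_2}x_2)$ sends the level set $\{f=\tau\}$ to $\{f=\lambda^d\tau\}$, trivializing the Milnor fibration over $\mathbb{C}^*$. Parallel transport around a loop in the $\tau$-disk is therefore realized by the action of $\zeta := e^{2\pi i/d}$ on the Milnor fiber $F = f^{-1}(1)$; in particular $T^d = \mathrm{id}$, so $T$ is semisimple with all eigenvalues among the $d$-th roots of unity.

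After the base change $\tau = t^m$ the monodromy becomes $T^m$, which is again semisimple. Since a semisimple operator is unipotent if and only if it equals the identity, the minimal $m$ for which the pulled-back family has unipotent monodromy is exactly $\mathrm{ord}(T)$, and this necessarily divides $d$.

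The remaining task is to prove that $\mathrm{ord}(T) = d$, i.e., that some primitive $d$-th root of unity appears in the spectrum of $T$. For class I singularities $f = x^{p_0}+y^{p_1}+z^{p_2}$ the classical Brieskorn--Pham description presents the eigenvalues of $T$ as products $\zeta_{p_0}^{a_0}\zeta_{p_1}^{a_1}\zeta_{p_2}^{a_2}$ with $1 \le a_i \le p_i-1$. Using $d = p_0 w_0 = p_1 w_1 = p_2 w_2$ from Example \ref{wgt}, such an eigenvalue has order $d/\gcd(d,\, a_0 w_0 + a_1 w_1 + a_2 w_2)$, and the coprimality condition $\gcd(w_0,w_1,w_2)=1$ guarantees that admissible $a_i$ exist making this denominator equal to $1$. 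For the remaining classes II--VII the analogous conclusion follows by combining Varchenko's theorem (cited immediately before the statement) with the Milnor--Orlik formula for the characteristic polynomial of the monodromy of a weighted homogeneous isolated singularity; both express $\det(t\,\mathrm{id}-T)$ as a product of cyclotomic-type factors in which $\Phi_d(t)$ appears with positive net multiplicity.

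The main obstacle is this final minimality step for classes II--VII: the explicit weights are cumbersome (as noted after Example \ref{wgt}), so the cleanest route is to apply the Milnor--Orlik / Varchenko formulas uniformly rather than verifying each class by hand; even so, one must be careful about possible cancellations among the cyclotomic factors appearing in the face-by-face product expansion in order to confirm that $\Phi_d(t)$ survives as an actual factor of the characteristic polynomial.
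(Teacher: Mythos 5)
Your overall strategy is the same as the paper's: reduce the statement to showing that every eigenvalue of the classical monodromy is a $d$-th root of unity and that a primitive $d$-th root of unity actually occurs, where $d=\deg_{\tilde w}(f)$. The first half of your argument (the $\mathbb{C}^*$-action trivializes the Milnor fibration over $\mathbb{C}^*$, hence $T^d=\mathrm{id}$, $T$ is semisimple, and the optimal base change is $\tau\to\tau^{\operatorname{ord}(T)}$, which divides $d$) is correct, and it makes explicit a reduction that the paper leaves implicit.

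The genuine gap is exactly where you flag it: the minimality step, i.e.\ the verification that a primitive $d$-th root of unity survives in the spectrum for all seven classes. For class I your Brieskorn--Pham computation is on the right track, but the assertion that $\gcd(w_0,w_1,w_2)=1$ ``guarantees'' admissible exponents $a_i$ with $\gcd(a_0w_0+a_1w_1+a_2w_2,d)=1$ is itself an unproved number-theoretic claim (the naive choice $a_i=1$ can fail, so some argument for the existence of a good triple is needed). For classes II--VII you only gesture at Milnor--Orlik/Varchenko and concede that cancellations among cyclotomic factors must be ruled out; but ruling them out \emph{is} the content of the lemma, so as written the proof is incomplete. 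The paper closes this gap by invoking Ebeling's closed form
$$
\theta_{S_0}(t) = \frac{(1-t^d)^{2g-2+r}\prod_{w_j\mid d}(1-t^{d/w_j})}{(1-t)\,\prod_{\alpha_i\mid d}(1-t^{d/\alpha_i})},
$$
in which the primitive $d$-th roots of unity can only arise from $(1-t^d)^{2g-2+r}$ and from numerator factors with $w_j=1$, while the denominator factors $(1-t)$ and $(1-t^{d/\alpha_i})$ with $\alpha_i\ge 2$ contain none of them; hence no cancellation can remove $\Phi_d$, and its multiplicity is visibly positive in the relevant cases. To complete your argument you should import this formula (or the equivalent expression in \cite[Prop. 2.2]{xu1989classification}) and make this no-cancellation observation explicit, rather than leaving it as a caveat.
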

\begin{proof}
Let $\xi_i$ be the eigenvalues of the classical monodromy operator associated to a smoothing of $S_0$; this monodromy is unipotent if and only if $\xi_i^m=1$ for all $i$.  We must prove that if 
$m=  \deg_{\tilde{w}}(f)$, then $\xi_i^m=1$, and that  $\deg_{\tilde{w}}(f)$ is the smallest possible integer with that property.  Since $\xi_i$ are also the roots of the characteristic polynomial $\theta_{S_0}(t)$ of the monodromy, our statement follows directly from an expression, due to Ebeling  \cite[Thm 1]{ebeling2002poincare}:
$$
\theta_{S_0}(t) = \frac{(1-t^d)^{2g-2+r} \prod_{w_j |d} (1-t^{d/w_j}) }{ (1-t) \prod_{\alpha_i | d}(1-t^{d/\alpha_i}) } 
$$ 
where $d =\deg_{\tilde{w}}(f)$,  $g$ is the genus of $C$, $\alpha_i$ and $r$ are as on Remark \ref{dol}.  Another explicit expression of $\theta_{S_0}(t)$ in terms of the weights is given on  \cite[Prop. 2.2]{xu1989classification}
\end{proof}
\subsection{Smoothing associated to the dual sets of $\mathcal{B}_{S_0}$} Let 
$X \subset \mathbb{C}^4$ be a smoothing of $S_0$ with an isolated quasihomogeneous singularity, by taking the weighted blow up of  $X$ we obtain an one dimensional family $\tilde{X}$ of surfaces  degenerating to the union of $S_1$ and $S_T$.  We
 are interested  whenever the surface $S_T$ has $\mathcal{\hat{B}}_{S_0}$ singularities.
\begin{defn}\label{sB}
Let $X \subset \mathbb{C}^4$ be  a  smoothing of $S_0$ with a good $\mathbb{C}^*$-action and let  $\hat{\mathcal{B}}_{S_0}$ be one of the dual sets of $ \mathcal{B}_{S_0}$.  We say that  $X$  is a smoothing associated to $\hat{\mathcal{B}}_{S_0}$ if the following conditions holds:
\begin{enumerate}
\item By taking the associated weighted blow up of $X$, we obtain a threefold  $\tilde{X}$ which central fiber decomposes as $\tilde{X}|_0= S_1 +S_T$ where $S_1$ is the proper transform of $S_0$ and $S_T$ is a well-formed, quasismooth surface in a weighed projective space.
\item The singular locus of $S_1$ is $\mathcal{B}_{S_0}$
\item The  singular locus of $S_T$ is  $\hat{\mathcal{B}}_{S_0}$ and maybe some additional DuVal singularities.
\end{enumerate}
We denote this smoothing as $ X(\hat{\mathcal{B}}_{S_0})$
\end{defn}
\begin{ex}
The set $(\hat{\mathcal{B}}_{S_0})_2$ described on Example \ref{notunq} does not have an associated smoothing $X \subset \mathbb{C}^4$ as defined above.  Indeed, the possible weights must be of the form 
$\mathbf{w}=(33,22,6,w_3)$.  A direct calculation shows that the condition 
$$
(\hat{\mathcal{B}}_{S_0})_2 \subset Sing ( \mathbb{P}(33,22,6,w_3) ) \cap S_T
$$ 
implies that $w_3 \equiv 49 \mod ( 66)$. By taking a linear combination of monomials 
quasihomogeneous with respect those weights, it is not possible to construct a smoothing that satisfies the conditions of Definition \ref{sB}. 
\end{ex}

\section{Type I Quasihomogeneous Singularities}\label{I}

For this family of surface singularities the setting of Section  \ref{wbup} allows us to create a family of surfaces $\tilde{X}$ degenerating to a central fiber  with singularities $\mathcal{B}_{S_0}$ and $\mathcal{\hat{B}}_{S_0}$.
\begin{thm}\label{thm1}
Let  $X$ be the unipotent smoothing of a  quasihomogeneous  singularity $S_0$ of type I; let 
$\tilde{X}$ be the proper transform of $X$ under the weighted blow up 
$\pi_{\textbf{w}}$. Then it holds that: 
\begin{enumerate}
\item $\tilde{X}$ has at worst  terminal cyclic quotient singularities 
\item  The central fiber  $\tilde{X}_0$  has orbifold double normal crossing singularities, and it decomposes in two surfaces $S_1+S_T$.
\item Let $
\mathcal{B}_{S_0} =\left\{ 
\frac{1}{r_i}(1,b_i) \right\}
$ be  the singular locus of $S_1$.  Then, the unipotent smoothing is the one associated to the dual set  $\mathcal{\hat{B}}_{S_0}
:=\left\{ 
\frac{1}{r_i}(1,-b_i)
\right\}
$
\end{enumerate}
\end{thm}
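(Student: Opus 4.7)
The plan is to work in explicit charts of the weighted blow up $\pi_{\mathbf{w}}$ and exploit the fact that, for the unipotent smoothing of a Class I singularity, the extra weight is forced to be $w_3=1$. Indeed, by Lemma \ref{smtg} the unipotent smoothing is $F=x^{p_0}+y^{p_1}+z^{p_2}+t^m$ with $m=\deg_{\tilde{w}}(f)$, and quasihomogeneity of $F$ pins the weight of $t$ to $w_3=1$. Consequently the ambient exceptional divisor $E_{\mathbf{w}}=\mathbb{P}(w_0,w_1,w_2,1)$ has singularities only along the three edges $P_iP_j$ with $i,j\in\{0,1,2\}$, and every edge through $P_3$ is smooth. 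Since each variable appears in $F$ with a pure monomial, the hypersurface $S_T=(\tilde F=0)\subset \mathbb{P}(w_0,w_1,w_2,1)$ avoids all four vertices, so by Lemma \ref{sgst} the singular locus of $S_T$ lies only in the three edges and there are no extra vertex contributions.

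For (1), I would compute the 3-fold singularity of $\tilde X$ at a generic point of each $S_T\cap P_iP_j$ using the vertex chart $\mathbb{C}^4/\mu_{w_i}$ with action $\tfrac{1}{w_i}(1,-w_j,-w_k,-1)$. In this chart the proper transform $\tilde F=0$ is locally a smooth hypersurface (the implicit function theorem applies since the pure monomials give a nonzero partial), so $\tilde X$ is smooth in the cover, and its singularity is exactly the stabilizer subgroup action on the tangent space. Localising near a point fixed only by $\mu_{\gcd(w_i,w_j)}$ leaves the 3-dimensional slice with weights $\tfrac{1}{\gcd(w_i,w_j)}(1,-w_k,-1)$. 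Because the first and third weights already sum to $0$, Reid--Tate (Remark \ref{exdual}) immediately gives terminality, proving (1).

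Part (2) is then a direct consequence: at a generic point of $C$ the divisors $S_1$ and $S_T$ are distinct smooth Cartier divisors on the smooth 3-fold $\tilde X$ meeting transversally, so $\tilde X_0=S_1+S_T$ is reduced normal crossings; at the cyclic quotient singular points of $\tilde X$ the same transversality upstairs descends to orbifold double normal crossings. The identification $S_1=$ proper transform of $S_0$, $S_T=\tilde X\cap E_{\tilde w}$, and $C=S_1\cap S_T$ is formal from the diagram in Section \ref{wbup}.

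For (3), I compare the two computations at the edge $P_iP_j$. By Example \ref{bf}, the surface $S_1$ at the corresponding branch contraction carries the cyclic quotient $\tfrac{1}{\alpha_k}(1,\beta_k)$ with $\alpha_k=\gcd(w_i,w_j)$ and $w_k\beta_k\equiv -1\pmod{\alpha_k}$. By Lemma \ref{sgst} applied to $\mathbf{w}=(w_0,w_1,w_2,1)$, the singularity of $S_T$ at that edge is $\tfrac{1}{\alpha_k}(1,c_{ij})$ with $1-w_kc_{ij}\equiv 0 \pmod{\alpha_k}$, i.e.\ $c_{ij}\equiv w_k^{-1}\equiv -\beta_k\pmod{\alpha_k}$. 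Hence $S_T$ carries precisely the singularities $\{\tfrac{1}{r_i}(1,-b_i)\}$, which, being paired with $\{\tfrac{1}{r_i}(1,b_i)\}$, satisfy $d_1+d_2\equiv 0\pmod{r_i}$; this is exactly the condition in Remark \ref{exdual} that makes the 3-fold singularity canonical (in fact terminal), and it identifies the smoothing as the one associated to the dual set $\hat{\mathcal{B}}_{S_0}$ in the sense of Definition \ref{sB}. The only place where care is needed is the modular arithmetic linking the $\beta_k$ from Example \ref{bf} to the $c_{ij}$ from Lemma \ref{sgst}; once that identification is checked, all three parts follow together from the same chart calculation.
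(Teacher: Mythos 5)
Your proposal is correct in substance, but it proves the theorem by a genuinely different route than the paper. The paper's proof runs through Ishii's toric characterization of canonical modifications: it introduces the essential cone $C_1(g)$, verifies that $\mathbf{w}=(w_0,w_1,w_2,1)$ lies in it and is $g$-minimal there, and then reads off terminality of $\tilde X$ from the discrepancy coefficients $m_{\mathbf{s}}$ of a toric resolution (showing every $\mathbf{s}\neq\mathbf{w}$ has either $m_{\mathbf{s}}>0$ or $E_{\mathbf{s}}\cap X(\Sigma_0)=\emptyset$), before quoting Lemma \ref{sgst} and Example \ref{bf} for the singularities of $S_T$ and $S_1$. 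You instead work directly in the charts $\mathbb{C}^4/\mu_{w_i}$ of $Bl_{\mathbf{w}}\mathbb{C}^4$: since $w_3=1$ forces all singular strata onto the three edges $P_iP_j$ with $i,j\in\{0,1,2\}$, and the pure monomials keep $S_T$ off the vertices and make the cover of $\tilde X$ smooth, you exhibit each singularity of $\tilde X$ explicitly as $\tfrac{1}{\gcd(w_i,w_j)}(1,-1,-w_k)$ and conclude terminality from the classification of three-dimensional terminal cyclic quotients (here you need $\gcd(w_k,\gcd(w_i,w_j))=1$, which holds because $\gcd(w_0,w_1,w_2)=1$). Your approach is more elementary and has the advantage of producing the orbifold double-normal-crossing local model $(y_0y_3=0)\subset\tfrac{1}{r}(1,-1,c)$ and the congruence $c_{ij}\equiv w_k^{-1}\equiv-\beta_k$ in one and the same chart, so parts (2) and (3) genuinely come for free — including the fact that the multiplicities in $\mathcal{B}_{S_0}$ and $\hat{\mathcal{B}}_{S_0}$ agree, since both sets of singularities sit at the same finite set of points of $C$. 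What the paper's argument buys that yours does not is the stronger structural fact that $\pi_{\mathbf{w}}$ is the canonical (indeed terminal) modification of $X$, i.e., a minimality statement about this particular weighted blow-up among all toric modifications, and somewhat greater robustness for equations that are merely Newton-nondegenerate rather than the normal form plus generic lower-order terms; your chart computation silently leans on the quasismoothness hypotheses of Remark \ref{gd} at exactly the points where the implicit function theorem is invoked, so you should state that dependence explicitly.
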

\begin{rmk}
An orbifold double normal crossing singularity is locally of the form
$$
(xy=0) \subset \frac{1}{r_i}(1,-1,c_i), \; \; \; \; (r_i,c_i)=1
$$ 
\end{rmk}
\begin{rmk}
In general for a surface singularity $S_0$ the set $\mathcal{B}_{S_0}$ have several  dual sets $\mathcal{\hat{B}}_{S_0}$ with different associated smoothings (see Section \ref{hm}).  Nevertheless, for unimodal singularities a sense of uniqueness is accomplished (see Theorem \ref{thm2}).
\end{rmk}
\begin{proof}
This result  follows from Ishii's characterization of canonical modifications \cite{ishiicw}. Next, we describe her approach.  The notation is the standard one in toric geometry. Let $X$ be an isolated hypersurface singularity defined by a non degenerated quasihomogeneous polynomial   $g:=\sum_{\textbf{a} \in M} c_{\textbf{a}}x^{\textbf{a}}$. Oka proved that we can obtain a resolution of $X$ by making a subdivision $\Sigma_0$ of the  dual fan of the Newton polytope $\Gamma(g) \subset N_{\mathbb{R}}$   (see \cite{oka}).  
 This subdivision $\Sigma_0$ is induced by primitive vectors $\mathbf{p}_i$ on 
$ N_{\mathbb{R}}$. From the fan associated to the  subdivision $\Sigma_0$, we obtain a toric variety  $T(\Sigma_0)$ such that the proper transform  $X(\Sigma_0)$ of $X$ is smooth, intersects transversely each orbit and 
$$ 
K_{ X(\Sigma_0)}= \varphi^*(K_X) + \sum_{\mathbf{p}_i \in \Sigma_0(1)-\sigma(1)}
 a(\mathbf{p}_i, X)E_{\mathbf{p}_i}|_{X(\Sigma_0)}
$$
where $E_{\mathbf{p}_i}$ are the exceptional divisor associated to the primitive vector $\mathbf{p}_i$, the vectors $\mathbf{p}_i  \in \Sigma_0(1)-\sigma(1)$ are the new rays added to the fan,  and 
$$
a(\mathbf{p}_i, X) =
 \sum_k (\mathbf{p}_i)_k - 
\min  \left\{  \mathbf{p}_i(\mathbf{a})  \;  \Big|  \; \mathbf{a} \in M, 
g:=\sum_{\textbf{a} \in M} c_{\textbf{a}}x^{\textbf{a}},  c_{\textbf{a}} \neq 0  \right\}
:= \mathbf{p}_i(\mathbf{1})-\mathbf{p}_i(g)-1
$$
with $\mathbf{p}(\mathbf{a}) = \sum_k p_ka_k$. From our purposes, it is enough to consider the primitive vectors $\mathbf{p}_i$ inside the essential cone of the singularity: 
$$
C_1(g):= \{ \mathbf{s} \in N_{\mathbb{R}} \; | \;  -1 \geq a(\mathbf{s}, X) \; \; \;
\text{ and }\;\; \; s_i \geq 0 
 \}.
$$
because if $ \mathbf{p} \notin C_1(g)$, then $a(\mathbf{p}, X) \geq 0$.    To prove that $\pi_{\mathbf{w}}$ is the canonical modification of $X$, we must show that
$\mathbf{w} \in C_1(g)$ and that  for any $ \mathbf{s}  \in C_1(g) $, $ \mathbf{s} \neq \mathbf{w} $ the discrepancy  associated to $E_{\mathbf{s}}$ is non negative.  
This translates into a combinatorial condition between
 $ \mathbf{w}$ and $\mathbf{s}$  (see \cite[Thm 2.8]{ishiicw});  the  weighted blow up 
$\pi_{\mathbf{w}}$ is the canonical modification of $X$ if and only if
$\mathbf{w} \in C_1(g)$  and $\mathbf{w}$ is $g$-minimal in $C_1(g) \cap N \setminus \{ 0\}$.   The $g$-minimality of  $\mathbf{w}$ means that for all $\mathbf{s} \in C_1(g)$ one of the two following inequalities holds for all  $i \in \{ 1 \ldots n \}$:
\begin{align}
\mathbf{w} \leq_g \mathbf{s}
:=
&
\frac{w_i}{\mathbf{w}(g)-\mathbf{w}(\mathbf{1})+1}
\leq \frac{s_i}{\mathbf{s}(g)-\mathbf{s}(\mathbf{1})+1}
\\
\mathbf{w} \preceq_g \mathbf{s}
:=
&
\frac{w_i}{\mathbf{w}(g)} 
\leq \frac{s_i}{\mathbf{s}(g)}
\end{align}
and $\mathbf{s}$ belongs to the interior of a $(n+1)$-dimensional cone of $\sigma(\mathbf{w})$. In our case, the equation is given by 
$$ 
g(x,y,z,t):=  x^{p_0}+y^{p_1}+z^{p_2}  +t^{\deg_{w}(S_0)}
+ \sum c_{i_0,i_1,i_2,i_3}x^{i_0}y^{i_1}z^{i_2}t^{i_3} 
$$ 
with $ p_0 \geq p_1 \geq p_2 \geq 2$,   $\mathbf{w} =(w_0,w_1,w_3,1)$ where $w_i$ is as on Example \ref{wgt} and  $i_0w_0+i_1w_1+i_2w_2+i_3 = \deg_{w}(S_0) $. The claim that $\mathbf{w} \in C_1(g)$ follows from an inductive argument and by ruling out very low values of the exponents,  such as $p_0=p_1=p_2=2$, which define log canonical surface singularities. By the definition of unipotent smoothing
$$
\frac{\mathbf{w}}{\mathbf{w}(g)}
=
\left(
\frac{1}{p_0} ,  \frac{1}{p_1}, \frac{1}{p_2} , \frac{(p_1p_2,p_0p_2,p_0p_1)}{p_0p_1p_2}
\right)
$$
Therefore, $g$-minimality follows at once from the definition of weighted degree: 
$$
\mathbf{s}(g)=\min \left\{ s_0p_0, s_1p_1, s_2p_2, s_3
\frac{p_0p_1p_2}{(p_1p_2,p_0p_2,p_0p_1)} \right\}
$$
Morever,   for $\mathbf{w}$ and any $\mathbf{s} \in C_1(g)$, it holds
$ \mathbf{w} \preceq \mathbf{s} $. This implies the singularities at $\tilde{X}$ are terminal.  Indeed, let $\Sigma$ be a non singular subdivision of  the fan $\Delta(\mathbf{w})$ associated to 
$Bl_{\textbf{w}} \mathbb{C}^n$, let $\phi$ be the proper birational morphism associated to this subdivision.  
$$
  \xymatrix{X(\Sigma) \ar[r]^{\phi} & \tilde{X} \ar[r]^{\pi_{\mathbf{w}}} &   X 
  }
$$
 Let $U_i \subset Bl_{\textbf{w}}  \mathbb{C}^n$ be an open set associated to the subfan 
$\Delta_i \subset \Delta(\mathbf{w})$, and let 
$\Sigma_i \subset \Sigma$ be the preimage of $\Delta_i$  on the non singular subdivision.
We can take the restriction $X_i:=\tilde{X} \cap U_i$ and its proper transform under the resolution $X(\Sigma_i) := \phi_*^{-1}(X_i)$ to obtain (see \cite[Prop. 2.6]{ishiicw})
$$
K_{X(\Sigma_i)} = \phi^*(K_{U_i}+X_i) \big|_{X(\Sigma_i)}
+\sum m_{\mathbf{s}} \left( E_{\mathbf{s}} \cap X(\Sigma_i) \right)_{\scriptstyle{red}}
$$
where $\mathbf{s} \neq \mathbf{w}$ and 
$$
m_{\mathbf{s}} = \frac{s_i}{w_i} \left( \mathbf{w}(f)-\mathbf{w}(\mathbf{1})+1 \right) -
\left( \mathbf{s}(f)-\mathbf{s}(\mathbf{1})+1 \right)
$$
In the proof of Theorem 2.8 at \cite{ishiicw} we found that:
\begin{enumerate}
\item If $ \mathbf{s} \in C_1(g)$ and $ \mathbf{w} \preceq_g \mathbf{s} $, then $E_{\mathbf{s}} \cap X(\Sigma_0) =\emptyset$ 
\item If $\mathbf{s} \notin C_1(g)$ then  $m_\mathbf{s} >0$.
\end{enumerate}
In our case  $ \mathbf{w} \preceq_g \mathbf{s} $ for all $\mathbf{s} \in C_1(g)$, then for any $\mathbf{s}$ we have either $m_s > 0$ or $E_{\mathbf{s}} \cap X(\Sigma_0) =\emptyset$.   This implies the singularities on $\tilde{X}$ are terminal, because resolving a canonical singularity will induce an exceptional divisor $E_s$ with $m_s=0$ and 
$E_s \cap X(\Sigma_0) \neq \emptyset$.  Terminal singularities are of codimension three, so they are isolated on $\tilde{X}$. By construction  the singularities on $\tilde{X}$ are cyclic quotient ones and caused solely by the $\mathbb{C}^*$-action. 
Finally, $\tilde{X}_0$ is reduced and it decomposes into two surfaces $S_1$ and $S_T$. The singularities of $S_1$ are the ones in  $\mathcal{B}_{S_0}$ (see  Example \ref{bf} for an explicit expression).  The singularities of $S_T$ are  calculated in Lemma \ref{sgst}. Our statement follows from those expressions.
\end{proof}
\begin{ex} The $W_{15}$ singularity (also known as $ A(1,-2,-2,-3,-3)$) is a Fuchsian bimodal singularity and its normal form is  $x^2+y^4+z^6$. The unipotent base change $t^{12} \to t$ induces the $8$th case on Yonemura's classification \cite{yonemura}.   The set of singularities are: 
\begin{align*}
\mathcal{B}_{W_{15}} = 
\left\{
2 \times \frac{1}{2}(1,1),  2 \times \frac{1}{3}(1,1)
\right\}
& & 
\mathcal{\hat{B}}_{W_{15}} = 
\left\{
2 \times \frac{1}{2}(1,1),  2 \times \frac{1}{3}(1,2)
\right\}
\end{align*}
This is the only dual set realized by a smoothing. In this case, the exceptional surface is a $K3$ surface $S_T:=S_{12} \subset \mathbb{P}(6,3,2,1)$
\end{ex}

\section{Unimodal Singularities} \label{unimod}

We focus on  unimodal non log canonical singularities.  It is well known that there are 14 of those singularities, and that they are all quasihomogeneous.  For more details about them, see Arnold \cite[pg 247]{arnold}, Laufer \cite{lme}, and Dolgachev \cite{dolgachev1974quotient}. 

\begin{thm} \label{thm2}
Let $S_0$ be an unimodal surface singularity, then it holds:
\begin{enumerate}
\item From all the canonical dual sets associated to $\mathcal{B}_{S_0}$, there is only one dual set $\mathcal{\hat{B}}_{S_0}$  with an associated smoothing 
$X(\mathcal{\hat{B}}_{S_0}) \to \Delta$ as on  Definition \ref{sB}.
\item The smoothing $X(\mathcal{\hat{B}}_{S_0}) $ coincides with the one induced by the unipotent base change.  Moreover, this threefold has an unique strictly log canonical  singularity.
\item The threefold $\tilde{X}$ has isolated terminal cyclic quotient singularities. 
\item The central fiber $\tilde{X}_0$  has orbifold double normal crossing singularities and it decomposes  in two surfaces $S_1$ and $S_T$ intersecting along a rational curve 
$C=S_1 \cap S_T$ which satisfies
$$
\left( S_1 \big|_{S_T} \right)^2  =  \frac{\deg_{\mathbf{w}}(f)}{w_0w_1w_2} 
$$
\item  $S_1$ is the proper transform of $S_0$ and it supports the singularities in $\mathcal{B}_{S_0}$.
\item $S_T$ is a K3 surface and it supports the singularities in $\mathcal{\hat{B}}_{S_0}$. 
\item The line bundles $K_{\tilde{X}}|_{S_1}$ and $K_{\tilde{X}}|_{S_T}$ are ample. 
\end{enumerate}
\end{thm}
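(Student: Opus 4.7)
The plan is to adapt the argument of Theorem~\ref{thm1} to the 14 exceptional unimodal singularities, combined with the key numerical fact that they all satisfy the \emph{exponent-one condition} $R=1$ of Remark~\ref{dol}, which for the Fuchsian class is equivalent to $\mathcal{L}=K_C$. Concretely, for each of the 14 cases one has $\deg_{\tilde{w}}(f) = w_0+w_1+w_2+1$. By Lemma~\ref{smtg} the unipotent base change is $t^d\to t$ with $d=\deg_{\tilde{w}}(f)$, so the four-variable equation $g=f+t^d+\cdots$ is weighted homogeneous with weights $\mathbf{w}=(w_0,w_1,w_2,1)$ satisfying $\sum_{i=0}^{3} w_i = d$. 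This single identity drives the K3 structure on $S_T$ as well as the strict log canonicity of $X$.

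\textbf{Singularities and central fiber.} The combinatorial Ishii argument used in Theorem~\ref{thm1} extends to all 14 cases: one verifies $\mathbf{w}\in C_1(g)$ and its $g$-minimality, concluding that $\pi_{\mathbf{w}}$ is the canonical modification of $X$ and that the singularities on $\tilde{X}$ are isolated terminal cyclic quotient (classified by Morrison--Stevens, Remark~\ref{exdual}). The discrepancy of $E_{\mathbf{w}}$ equals $\sum_i w_i - d - 1 = -1$, so $X$ is strictly log canonical with this unique singularity at the origin; this yields (2) and (3). The central fiber $\tilde{X}_0 = S_1+S_T$ decomposes as usual: $S_1$ is the proper transform of $S_0$, supporting $\mathcal{B}_{S_0}$ by Example~\ref{bf}, and $S_T\subset\mathbb{P}(w_0,w_1,w_2,1)$ is a quasismooth well-formed hypersurface of degree $d$, whose singular locus is given by Lemma~\ref{sgst}. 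Local analytic coordinates at points of $C=S_1\cap S_T$ exhibit the orbifold double normal crossing structure asserted in (4).

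\textbf{K3 property, duality, intersection, ampleness.} Adjunction for a quasismooth well-formed hypersurface gives $K_{S_T} = \mathcal{O}_{S_T}(d-\sum_i w_i) = 0$; with $h^1(\mathcal{O}_{S_T})=0$ from Lefschetz, $S_T$ is a K3 surface (its singularities turning out to be Du Val after the case-by-case verification below). The curve $C=\{t=0\}\cap S_T$ is geometrically rational since the Fuchsian curve in Remark~\ref{dol} has $g=0$ for the 14 unimodal singularities. A case-by-case check across the 14 cases, matching Lemma~\ref{sgst} with the two Fuchsian dual options $\frac{1}{\alpha_i}(1,\alpha_i-1)$ and $\frac{1}{\alpha_i}(1,\alpha_i-2)$ of Remark~\ref{exdual}, identifies $\operatorname{Sing}(S_T)$ with a specific dual set $\mathcal{\hat{B}}_{S_0}$, giving (6); uniqueness in (1) then follows by checking, in the spirit of Example~\ref{notunq}, that no other dual set of $\mathcal{B}_{S_0}$ admits a smoothing satisfying Definition~\ref{sB}. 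The intersection in (4) is computed in weighted projective space: since $w_3=1$, $C\sim H|_{S_T}$, so $C^2 = d\cdot H^3 = d/(w_0 w_1 w_2)$. Finally, adjunction on $\tilde{X}$ gives $K_{\tilde{X}}|_{S_1} = K_{S_1}+C$, which is $\pi_{\tilde{w}}$-ample by Lemma~\ref{lcS1}, and $K_{\tilde{X}}|_{S_T} = K_{S_T}+C = C$, ample on the K3 surface $S_T$ as a hyperplane class; this proves (7). The main obstacle is the exhaustive case-by-case verification for the 14 singularities: identifying the realized dual set from Lemma~\ref{sgst} and ruling out every other candidate dual set of $\mathcal{B}_{S_0}$ as coming from a smoothing of Definition~\ref{sB}.
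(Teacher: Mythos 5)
Your proposal reaches the same conclusions and shares the paper's backbone (case-by-case verification over the 14 singularities for uniqueness of the dual set, adjunction for the K3 property, quasismoothness forcing cyclic quotient singularities on $\tilde{X}$, and Lemma \ref{lcS1} for the ampleness of $K_{\tilde{X}}|_{S_1}$), but it diverges in two genuinely different and mostly advantageous ways. First, you compute $\left( S_1|_{S_T}\right)^2$ directly in the weighted projective space: since $w_3=1$, the curve $C$ is cut on the degree-$d$ hypersurface $S_T$ by $\{t=0\}\in |\mathcal{O}(1)|$, so $C^2=d\cdot H^3=d/(w_0w_1w_2)$. The paper instead passes to a smooth model and combines Lemma \ref{C2} (inverse intersection matrices of the $A_k$ chains), Lemma \ref{tpf} (triple point formula) and Lemma \ref{tC2} (the Orlik--Wagreich formula for $\hat{C}^2$), with the Fuchsian condition $\beta_k=1$ making the correction terms cancel. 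Your route is shorter and makes the role of $w_3=1$ transparent; the paper's route has the side benefit of exhibiting the resolution data explicitly. Second, you isolate the identity $d=\sum_{i=0}^{3}w_i$ (exponent $R=1$) and use it both for $K_{S_T}=\mathcal{O}_{S_T}(d-\sum w_i)=0$ and for the discrepancy $\sum w_i-d-1=-1$ giving strict log canonicity of $X$; the paper leaves this arithmetic implicit and instead leans on Yonemura's classification. One caution: your claim that the Ishii $g$-minimality argument of Theorem \ref{thm1} ``extends to all 14 cases'' is optimistic as stated, since that computation of $\mathbf{s}(g)=\min\{s_ip_i,\dots\}$ used the diagonal (Class I) form of the equation, and several unimodal singularities ($Q_{10}$, $S_{11}$, $S_{12}$, $Z_{11}$, \dots) are of Classes II--V; the paper avoids this by citing Fujiki for the cyclic quotient structure and Yonemura for the fact that $\pi_{\mathbf{w}}$ is the terminal modification. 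Either you would need to redo the $g$-minimality check for the non-diagonal normal forms, or fall back on those references as the paper does; with that repair your argument is sound.
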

\begin{rmk}\label{sk3}
Yonemura \cite{yonemura} classified all the hypersurface threefold singularities which exceptional surface is a  normal $K3$ surface with canonical singularities.  There are 95 of those families and they are in bijection with the list of 95 normal K3 surfaces that appear as a hypersurface in a weighted projective space.
\end{rmk}
\begin{rmk}\label{mono}
The relationship between monodromy and smoothings of surfaces with semi log canonical (slc) singularities is not straightforward.  Indeed,  let $S_1 \cup S_T$ be a surface with at worst slc singularities. Then, $S_T$ and $S_1$ can have cyclic quotient singularities away from their intersection.  Any cyclic quotient singularity is log terminal and they can induce arbitrary large monodromy to a generic smoothing of these surfaces.  On other hand, the hypothesis that there is a semistable family dominating the smoothing is used in the proof of important theorems related to slc surfaces (For example \cite[Thm 5.1]{KSB88}). The  monodromy theorem implies that those families have unipotent smoothings.
\end{rmk}
\begin{proof}
Let $S_0$ be an unimodal singularity with associated weights $(w_0,w_1,w_2)$, the first statement claims that there is only one $1 \leq w_3 \leq \deg_{\tilde{w}}(S_0)$ and one dual set $\mathcal{\hat{B}}_{S_0}$ for which the singularities induced on 
$S_{T} \subset \mathbb{P}(w_0,w_1,w_2,w_3)$ are the ones in $\mathcal{\hat{B}}_{S_0}$.  The second and third statement means that such unique set of weights is $\textbf{w}=(w_0,w_1,w_2,1)$, and that  the cyclic quotient singularities on our $\mathcal{\hat{B}}_{S_0}$ are of type $\frac{1}{\alpha_i}(1,\alpha_i-1)$ (see Example \ref{dfc}).  These statements follows from an individual study of each singularity and their associated weighted projective spaces; this is described on the rest of the section. In fact, we wrote a small computer program in Sage \cite{sage} to compute the singularities of  $S_T$, to compare them with the different $\mathcal{\hat{B}}_{S_0}$, and to test the quasismoothness and well formedness of our possible exceptional surfaces.  The quasismothness of $X$ implies  that $\tilde{X}$ has only cyclic quotient singularities
(see \cite[Lemma 8]{fujiki}) which are terminal by the nature of $\mathcal{B}_{S_0}$ and 
$\mathcal{\hat{B}}_{S_0}$. Those terminal singularities are induced by  
$ \tilde{X} \cap Sing  \left( \mathbb{P}(w_0,w_1,w_2,1) \right)$ and they are isolated.  We can see that $S_T$ is a K3 surface by the adjunction formula. We remark that the smoothing $X \to \Delta$ and its partial resolution $\tilde{X} \to X \to \Delta$ has been studied by Yonemura \cite{yonemura}, Ishii, and Tomari in the context of simple K3 surface singularities.   Several of our claims follow by their work. In particular, they show that the weighted blow up is the terminal modification of $X$, and that in fact it is unique if $X$ is defined by a generic polynomial (see \cite[Thm. 3.1]{yonemura}).  The relationship between unimodal singularities and Yonemura's classification is described, in another context, by Prokhorov
\cite{prokhorov}.

We find the value of $\left( S_1 \big|_{S_T} \right)^2$ by using Lemmas \ref{tC2} and \ref{C2}.  From adjunction formula, the fact that $S_T$ is a normal K3 surface, and that the fibers are numerically equivalent we have:
\begin{align*}
K_{ \tilde{X}}\big| _{S_1} = K_{S_1} + S_T\big|_{S_1}
& &
K_{ \tilde{X}}\big| _{S_T} =  K_{S_T}+ S_1 \big|_{S_T} =  S_1 \big|_{S_T} 
\end{align*}
The surface $S_T$ holds $Pic(S_T) \cong \mathbb{Z}$. Therefore,
$
 K_{\tilde{X}}\big| _{S_T}$ is ample because  its degree is positive. The ampleness of $K_{ \tilde{X}}\big| _{S_1} $ follows from Lemma \ref{lcS1}
\end{proof}

\begin{rmk} \label{badu12}
The previous statements are not longer true for all higher modal singularities.  See 
Section \ref{hm} for details.  Moreover, the weighted blow up of an arbitrary quasihomogeneous smoothing does not necessarily yields a threefold with canonical or terminal singularities (see Remark \ref{u12}).  
\end{rmk}
Next, we give an explicitly description of the central fiber $\tilde{X}|_0$ for each unimodal singularity.  We describe the most details for the $E_{12}$ singularity.  The other cases are similar.
\subsection{The $E_{12}$ singularity}\label{e12}
 It is also known as $D_{2,3,7}$ or $Cu(-1)$. Its normal form is $x^2+y^3+z^7$, and its unimodal base change $t^{42} \to t$ induces the $20$th case of Yonemura's classification.  The surface $S_1$ supports the cyclic quotient singularities: 
$$
\mathcal{B}_{E_{12}}
=
\left\{
\frac{1}{2}(1,1), \frac{1}{3}(1,1), \frac{1}{7}(1,1)
\right\}
$$
The possible dual sets are
\begin{align*}
\mathcal{\hat{B}}^1_{E_{12}} =
\left\{
\frac{1}{2}(1,1), \frac{1}{3}(1,2), \frac{1}{7}(1,5)
\right\}
& &
\mathcal{\hat{B}}^2_{E_{12}} = 
\left\{
\frac{1}{2}(1,1), \frac{1}{3}(1,1), \frac{1}{7}(1,5)
\right\}
\\
\mathcal{\hat{B}}^3_{E_{12}} =
\left\{
\frac{1}{2}(1,1), \frac{1}{3}(1,1), \frac{1}{7}(1,6)
\right\}
& &
\mathcal{\hat{B}}^4_{E_{12}} = 
\left\{
\frac{1}{2}(1,1), \frac{1}{3}(1,2), \frac{1}{7}(1,6)
\right\}
\end{align*}
The singularities on $\mathcal{\hat{B}}^4_{E_{12}}$  are the only ones that can be realized by a weighed blow up such that  the induced surface $S_{42} \subset \mathbb{P}(21,14,6,w_3)$
is quasismooth and well-formed as on Definition \ref{sB}.  In that case $w_3=1$, and $S_T$ is a K3 surface.  The surfaces in the central fiber intersect along a rational curve $C= S_T  \cap S_1$.  (see Figure \ref{fig1})
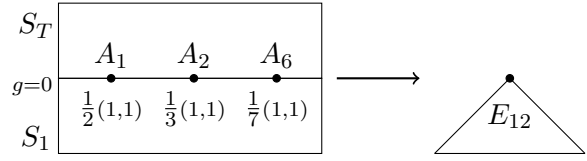
\begin{figure}[h!]
\begin{center}
\begin{tikzpicture}
\draw (4,0) -- (6,0); 
\draw (5,1) -- (4,0);
\draw (5,1) -- (6,0); 
\filldraw (5,1) circle [radius=0.05];
\node at (3,1.3) { };
\draw[ thick] [->] (2.7,1) -- (3.8,1);
\draw (-1,0) rectangle (2.5,1);
\draw (-1,1) rectangle (2.5,2);
\filldraw (-0.3,1) circle [radius=0.05];
\node at (0.3,1.3) { };
\filldraw (0.8,1) circle [radius=0.05];
\node at (0.8,1.3) { };
\filldraw (1.9,1) circle [radius=0.05];
\node at (1.9,1.3) { };
\node at (5,1.3) { };
\node at (-0.8,0.3) { };
\node at (-0.8,1.6) { };
\node at (5,0.3) { };
\node at (-1.35,0.9) { $\scriptstyle{g=0}$};
\node at (-0.3,1.3) { $A_1$};
\node at (0.8,1.3) { $A_2$};
\node at (1.9,1.3) { $A_6$};
\node at (-0.3,0.6) { $\frac{1}{2}\scriptstyle{(1,1)}$};
\node at (0.8,0.6) {$\frac{1}{3}\scriptstyle{(1,1)}$ };
\node at (1.9,0.6) { $\frac{1}{7}\scriptstyle{(1,1)}$};
\node at (5,0.5) { $E_{12}$};
\node at (-1.3,1.7) { $S_{T}$};
\node at (-1.3,0.2) { $S_{1}$};
\end{tikzpicture}
\caption{Analysis of the $E_{12}$ singularity}\label{fig1}
\end{center}
\end{figure}

\subsection{The $E_{13}$ singularity} It is also known as $D_{2,4,5}$ or $Ta(-2,-3)$. Its 
normal form is  $x^2+y^3+yz^5$, and its unipotent smoothing $t^{30} \to t$ induces the $50$th case on Yonemura's classification. The set of singularities are
\begin{align*}     
\mathcal{B}_{E_{13}} =  \left\{  \frac{1}{2}(1,1), \frac{1}{4}(1,1), \frac{1}{5}(1,1) \right\}
& &
\mathcal{\hat{B}}_{E_{13}} =  \left\{ 
 \frac{1}{2}(1,1), \frac{1}{4}(1,3), \frac{1}{5}(1,4) 
\right\}
\end{align*}
The associated K3 surface is $S_{30} \subset \mathbb{P}(15,10,4,1)$.

\subsection{The $E_{14}$ singularity} It is also known as $D_{3,3,4}$ or $Tr(-2,-2,-3)$. Its is normal form is $x^2+y^3+z^8$, and its unipotent base change $t^{24} \to t$ induces  the $13$th case on Yonemura's classification. The set of singularities are:
\begin{align*}
\mathcal{B}_{E_{14}} = \left\{
2 \times \frac{1}{3}(1,1), \frac{1}{4}(1,1)
\right\}
 & & 
\mathcal{\hat{B}}_{E_{14}} = \left\{
2 \times \frac{1}{3}(1,2), \frac{1}{4}(1,3)
\right\}
\end{align*}
The associated K3 surface is $S_{24} \subset \mathbb{P}(12,8,3,1)$.
\begin{rmk} The set of weights 
$ \textbf{v} =  (12, 8, 3, 13) $ seems to induce the singularities of the set $\mathcal{\hat{B}}_{E_{14}} $. However,  in this case the surface 
$S_{24} := (x^2+y^3+z^8+yzt=0) \subset \mathbb{P}(12,8,3,13)$  is not quasismooth.   
In fact, the surface has a $A_1$ singularity supported on the vertex $P_3$ which itself supports the singularity  $\frac{1}{13}(1,5,10)$. 
\end{rmk}

\subsection{The $U_{12}$ singularity} 
It is also known as $D_{4,4,4}$ or $Tr(-3,-3)$. Its normal form is $x^3+y^3+z^4$, and its unipotent base change  $t^{12} \to t$ induces the 4th case on the Yonemura classification.  The set of singularities are: 
\begin{align*}
\mathcal{B}_{U_{12}}
=
\left\{
3 \times \frac{1}{4}(1,1)
\right\}
 & & 
\mathcal{\hat{B}}_{U_{12}}
=
\left\{
3 \times \frac{1}{4}(1,3)
\right\}
\end{align*}
The associated K3 surface is $S_{12} \subset \mathbb{P}(4,4,3,1)$.
\begin{rmk}\label{u12} (see Remark \ref{badu12})  The smoothing induced by the weights $\textbf{v} = (4,4,3,3)$ induces the exceptional surface
$$
S_{12}: =(x^3+y^3+z^4+t^4=0) \subset \mathbb{P}(4,4,3,3)
$$ 
supporting the singularities
$$
\left\{
 3 \times \frac{1}{4}(1,1), 4 \times \frac{1}{3}(1,1)
\right\}
$$
The threefold  supports the singularities $\frac{1}{4}(1,1,1)$. Next, we apply the Reid-Tai criterion to the associated group generator 
$
\epsilon_{4} (x,y,z) \to (\epsilon_4x, \epsilon_4 y, \epsilon_4 z) 
$. 
The age of $\epsilon_4$ (see \cite[105]{smmp}) is $3/4 <1$ which implies the singularity is not canonical.
\end{rmk}
\begin{rmk} \label{ru12}
The set of weights 
$\textbf{v} = (4, 4, 3, 9)$ induces the quasismooth surface
$S_T:= (x^3+y^3+z^4+zt =0) \subset \mathbb{P}(4, 4, 3, 9)$. The induced singularities on $S_T$  
 are
$$
\left\{
3 \times \frac{1}{4}(1,3), \frac{1}{3}(1,1), \frac{1}{9}(1,1)
\right\}
$$
the problem here is that we have additional non DuVal singularities on our exceptional tail.
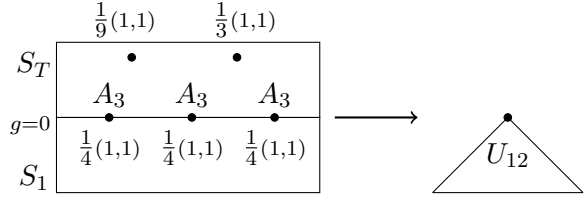
\begin{figure}[h!]
\begin{center}
\begin{tikzpicture}
\draw (4,0) -- (6,0); 
\draw (5,1) -- (4,0);
\draw (5,1) -- (6,0); 
\filldraw (5,1) circle [radius=0.05];
\node at (3,1.3) { };
\draw[ thick] [->] (2.7,1) -- (3.8,1);
\draw (-1,0) rectangle (2.5,1);
\draw (-1,1) rectangle (2.5,2);
\filldraw (-0.3,1) circle [radius=0.05];
\node at (0.3,1.3) { };
\filldraw (0.8,1) circle [radius=0.05];
\node at (0.8,1.3) { };
\filldraw (1.9,1) circle [radius=0.05];
\node at (1.9,1.3) { };
\filldraw (1.4,1.8) circle [radius=0.05];
\filldraw (0,1.8) circle [radius=0.05];
\node at (5,1.3) { };
\node at (-0.8,0.3) { };
\node at (-0.8,1.6) { };
\node at (5,0.3) { };
\node at (-1.35,0.9) { $\scriptstyle{g=0}$};
\node at (-0.1,2.3) { $\frac{1}{9}\scriptstyle{(1,1)}$};
\node at (1.5,2.3) { $\frac{1}{3}\scriptstyle{(1,1)}$};
\node at (-0.3,1.3) { $A_3$};
\node at (0.8,1.3) { $A_3$};
\node at (1.9,1.3) { $A_3$};
\node at (-0.3,0.6) { $\frac{1}{4}\scriptstyle{(1,1)}$};
\node at (0.8,0.6) {$\frac{1}{4}\scriptstyle{(1,1)}$ };
\node at (1.9,0.6) { $\frac{1}{4}\scriptstyle{(1,1)}$};
\node at (5,0.5) { $U_{12}$};
\node at (-1.3,1.7) { $S_{T}$};
\node at (-1.3,0.2) { $S_{1}$};
\end{tikzpicture}
\caption{Remark \ref{ru12} about the $U_{12}$ singularity}\label{figu12}
\end{center}
\end{figure}

\end{rmk}

\subsection{The $W_{12}$ singularity} It is also known as $D_{2,5,5}$ or $Ta(-3,-3)$. Its is normal form is $x^2+y^4+z^5$, its unipotent base change $t^{20} \to t$  induces  the $9$th case on Yonemura's classification. The set of singularities are:
\begin{align*}
\mathcal{B}_{W_{12}} = \left\{
 \frac{1}{2}(1,1), 2 \times \frac{1}{5}(1,1)
\right\}
& &
\mathcal{\hat{B}}_{W_{12}} = \left\{
 \frac{1}{2}(1,1), 2 \times \frac{1}{5}(1,4)
\right\}
\end{align*}
where $\mathcal{\hat{B}}_{W_{12}} $ is supported on the K3 surface $S_{20} \subset \mathbb{P}(10,5,4,1)$.

\subsection{The $W_{13}$ singularity} It is also known as  $D_{3,4,4,}$ or $Tr(-2,-3,-3)$. Its normal form is  $x^2+y^4+yz^4$, and its unipotent smoothing $t^{16} \to t$ induces the $37$th case on Yonemura's classification. The set of singularities are 
\begin{align*}
\mathcal{B}_{W_{13}} = \left\{
 \frac{1}{3}(1,1), 2 \times \frac{1}{4}(1,1)
\right\}
& &
\mathcal{\hat{B}}_{W_{13}} = \left\{
 \frac{1}{3}(1,2), 2 \times \frac{1}{4}(1,3)
\right\}
\end{align*}
The associated $K3$ surface is $S_{16} \subset \mathbb{P}(8,4,3,1)$.
\begin{rmk}\label{rw13}
The smoothing obtained by taking linear combination of monomials of degree $16$ with respect the weights $(8,4,3,13)$ is associated to the set of singularities 
\begin{align*}
\left\{
\frac{1}{3}(1,2), 2  \times \frac{1}{4}(1,3),  \frac{1}{13}(1,7)
\right\}
\end{align*}
Then we discard this smoothing  because the presence of a non DuVal singularity on the surface $S_T$
\begin{figure}[h!]
\begin{center}
\begin{tikzpicture}
\draw (4,0) -- (6,0); 
\draw (5,1) -- (4,0);
\draw (5,1) -- (6,0); 
\filldraw (5,1) circle [radius=0.05];
\node at (3,1.3) { };
\draw[ thick] [->] (2.7,1) -- (3.8,1);
\draw (-1,0) rectangle (2.5,1);
\draw (-1,1) rectangle (2.5,2);
\filldraw (-0.3,1) circle [radius=0.05];
\node at (0.3,1.3) { };
\filldraw (0.8,1) circle [radius=0.05];
\node at (0.8,1.3) { };
\filldraw (1.9,1) circle [radius=0.05];
\node at (1.9,1.3) { };
\filldraw (0.8,1.8) circle [radius=0.05];
\node at (5,1.3) { };
\node at (-0.8,0.3) { };
\node at (-0.8,1.6) { };
\node at (5,0.3) { };
\node at (-1.35,0.9) { $\scriptstyle{g=0}$};
\node at (0.8,2.3) { $\frac{1}{13}\scriptstyle{(1,7)}$};
\node at (-0.3,1.3) { $A_4$};
\node at (0.8,1.3) { $A_5$};
\node at (1.9,1.3) { $A_5$};
\node at (-0.3,0.6) { $\frac{1}{3}\scriptstyle{(1,1)}$};
\node at (0.8,0.6) {$\frac{1}{4}\scriptstyle{(1,1)}$ };
\node at (1.9,0.6) { $\frac{1}{4}\scriptstyle{(1,1)}$};
\node at (5,0.4) { $W_{12}$};
\node at (-1.3,1.7) { $S_{T}$};
\node at (-1.3,0.2) { $S_{1}$};
\end{tikzpicture}
\caption{Remarl \ref{rw13} about the $W_{13}$ singularity}
\end{center}
\end{figure}
\end{rmk}
\subsection{The $Q_{10}$ singularity} It  also known as $D_{2,3,9}$ or $Cu(-3)$. Its normal form is  $x^2z+y^3+z^4$, its unipotent base change $t^{24} \to t$ induces the 20th case on Yonemura classification.  The set of singularities are
\begin{align*}
\mathcal{B}_{Q_{10}} 
=
\left\{ \frac{1}{2}(1,1), \frac{1}{3}(1,1), \frac{1}{9}(1,1)\right\} 
& &
\mathcal{\hat{B}}_{Q_{10}}  
=
\left\{ \frac{1}{2}(1,1), \frac{1}{3}(1,2), \frac{1}{9}(1,8)\right\}
\end{align*}
The associated K3 surface is $S_{24} \subset \mathbb{P}(9,8,6,1)$. 
\begin{rmk}
Consider another smoothing constructed by taking linear combination of monomials of degree  $24$ with respect the weights $(9,8,6,5)$. The associated exceptional surface
has  singularities:
$$
\left\{ 
\frac{1}{2}(1,1), \frac{1}{3}(1,1), \frac{1}{9}(1,4), \frac{1}{5}(1,3)
\right\}
$$
after taking another base change $t^5 \to t$, we will induce the smoothing 
$(x^2z+y^3+z^4+xt^{15}=0)$ which a non generic smoothing on the $Q_{10}$ versal deformation space.
\end{rmk}

\subsection{The $Q_{11}$ singularity} also known as $D_{2,4,7}$ or $Ta(-2,-5)$. Its associated equation is $x^2z+y^3x+ yz^3$, its unipotent base change  $t^{18} \to t$ induces the 60th case on Yonemura classification. The sets of singularities are
\begin{align*}
\mathcal{B}_{Q_{11}} 
=
\left\{ 
\frac{1}{2}(1,1), \frac{1}{4}(1,1), \frac{1}{7}(1,1)
\right\}
& & 
\mathcal{\hat{B}}_{Q_{11}} 
=
\left\{ 
\frac{1}{2}(1,1), \frac{1}{4}(1,3), \frac{1}{7}(1,6)
\right\}
\end{align*}
the  associated K3 surface is $S_{18} \subset \mathbb{P}(7,6,4,1)$.

\subsection{The $Q_{12}$ singularity} It also known as  $D_{3,3,6}$ or $Tr(-2,-2,-5)$. Its normal form is $x^2z+y^3+z^5$, and its unipotent smoothing $t^{15} \to t$ induces the $22$th case on Yonemura's classification. The set of singularities are:
\begin{align*}
\mathcal{B}_{Q_{12}} 
=
\left\{ 
2 \times \frac{1}{3}(1,1), \frac{1}{6}(1,1)
\right\}
& & 
\mathcal{\hat{B}}_{Q_{12}} 
=
\left\{ 
2 \times \frac{1}{3}(1,2), \frac{1}{6}(1,5)
\right\}
\end{align*}
the associated K3 surface is $S_{15} \subset \mathbb{P}(6,5,3,1)$.
\begin{rmk}
A general smoothing defined by a linear combination of monomials of degree $15$ with respect the weights $(6,5,3,2)$ is given by
$$
x^2z +y^3+xz^3 +z^5
+y^2zt+(xy+yz^2)t^2+(xz+z^3)t^3+yt^5+zt^6
$$
Therefore,  the edge $P_0P_3$ given by $(y=z=0)$ is contained on $S_T$.  However, this edge is a line of $A_1$ singularities  so the surface $S_T$ is not normal. Note that the other singularities on $S_T$ are dual to the ones on $\mathcal{B}_{Q_{12}} $
 $$
\mathcal{\hat{B}}_{Q_{12}} 
=
\left\{ 
2 \times \frac{1}{3}(1,1), \frac{1}{6}(1,4)
\right\}
$$
\end{rmk}

\subsection{ The $S_{11}$ singularity} It is also known as $ D_{2,5,6}$ or $Ta(-3,-4)$. Its normal form is  $x^2z+y^2x+z^4$, and its unipotent base change $t^{16} \to t$ induces the 
$58$th case on Yonemura classification.  The set of singularities are:
\begin{align*}
\mathcal{B}_{S_{11}} 
=
\left\{ 
\frac{1}{2}(1,1),  \frac{1}{5}(1,1), \frac{1}{6}(1,1)
\right\}
& & 
\mathcal{\hat{B}}_{S_{11}} 
=
\left\{ 
\frac{1}{2}(1,1),  \frac{1}{5}(1,4), \frac{1}{6}(1,5)
\right\}
\end{align*} 
The associated K3 surface is given by $S_{16} \subset \mathbb{P}(6,5,4,1)$

\subsection{The singularity $S_{12}$} It is also known as $D_{3,4,5}$ or  $Tr(-2,-3,-4)$. Its normal form  is $x^2z+xy^2+yz^3$ and its unipotent base change $t^{13} \to t$ induces the 87th case on Yonemura classification.  The set of singularities are:
\begin{align*}
\mathcal{B}_{S_{12}} 
=
\left\{ 
 \frac{1}{3}(1,1), \frac{1}{4}(1,1), \frac{1}{5}(1,1)
\right\}
& & 
\mathcal{\hat{B}}_{S_{12}} 
=
\left\{ 
\frac{1}{3}(1,2),  \frac{1}{4}(1,3), \frac{1}{5}(1,4)
\right\}
\end{align*} 
The associated K3 surface is $S_{13} \subset \mathbb{P}(5,4,3,1)$

\subsection{The singularity $Z_{11}$} It is also known as  $D_{2,3,8}$ or $Cu(-2)$. Its normal form is  $x^2+y^3z+z^5$  and its unipotent base change $t^{30} \to t$ induces the 38th case on the Yonemura smoothing.  The set of singularities are:
\begin{align*}
\mathcal{B}_{Z_{11}} 
=
\left\{ 
\frac{1}{2}(1,1), \frac{1}{3}(1,1), \frac{1}{8}(1,1)
\right\}
& & 
\mathcal{\hat{B}}_{Z_{11}} 
=
\left\{ 
\frac{1}{2}(1,1),  \frac{1}{3}(1,2), \frac{1}{8}(1,7)
\right\}
\end{align*} 
The associated K3 surface is $S_{30}  \subset \mathbb{P}(15,8,6,1)$ 

\subsection{The singularity $Z_{12}$} It is also known as  $D_{2,4,6}$ or $ Ta(-2,-4)$. Its normal form is $ x^2+y^3z+yz^4$, and its unipotent base change $t^{22} \to t$ induces the 78th case of the Yonemura classification.   The set of singularities are:
\begin{align*}
\mathcal{B}_{Z_{12}} 
=
\left\{ 
\frac{1}{2}(1,1), \frac{1}{4}(1,1), \frac{1}{6}(1,1)
\right\}
& & 
\mathcal{\hat{B}}_{Z_{12}} 
=
\left\{ 
\frac{1}{2}(1,1),  \frac{1}{4}(1,3), \frac{1}{6}(1,5)
\right\}
\end{align*} 
The associated K3 surface is  $S_{22} \subset  \mathbb{P}(11,6,4,1)$ 

\subsection{The singularity $Z_{13}$} It is also known as $D_{3,3,5}$ or  $Tr(-2,-2,-4)$. Its normal form is $ x^2+y^3z+z^6$  and its unipotent base change $t^{18} \to t$ induces the 39th case on the Yonemura's classification.
\begin{align*}
\mathcal{B}_{Z_{13}} 
=
\left\{ 
2 \times \frac{1}{3}(1,1), \frac{1}{5}(1,1)
\right\}
& & 
\mathcal{\hat{B}}_{Z_{13}} 
=
\left\{ 
2 \times \frac{1}{3}(1,2),  \frac{1}{5}(1,4)
\right\}
\end{align*} 
The associated K3 surface is  $S_{18} \subset \mathbb{P}(9,5,3,1)$,

\subsection{The line bundle of the exceptional surface} The following lemmas are used to prove the ampleness of the line bundle on Theorem \ref{thm2}.
\begin{lemma}\label{C2}
Let $\tilde{S}_T$ be the smooth model of $S_T$, let $E_k$ be the exceptional divisors associated to the resolution $\varphi:\tilde{S}_T  \to S_T$, and let $\tilde{C}$ be the proper transform of $C =S_1\big|_{ S_T}$  on $\tilde{S}_T$. Then, it holds:
$$
C^2=(\varphi^*(C))^2 = \tilde{C}^2-\sum_{j,k}(E_j.\tilde{C})D_{jk}(E_k.\tilde{C}) 
$$
where $D_{jk}$ is the inverse of the intersection matrix $E_jE_k$. In particular, if all the singularities on $\mathcal{\hat{B}}_{S_0}$ are of type
$\{ A_{k_1}, \ldots A_{k_m} \}$, then 
\begin{align}\label{sumAk}
C^2= \tilde{C}^2 + \sum_{j=1}^{j=m} \frac{k_j}{k_j+1}
\end{align}
\end{lemma}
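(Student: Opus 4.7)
The plan is to first establish the general displayed formula via a standard discrepancy-type computation, then specialize to the $A_{k_j}$ case using the explicit inverse of the $A_n$ intersection matrix together with the local geometry of $C$ at each singularity.

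First I would observe that, since $S_T$ has only quotient singularities by Theorem \ref{thm2}, the Weil divisor $C$ is $\mathbb{Q}$-Cartier, so its pullback $\varphi^*(C)$ is a well-defined $\mathbb{Q}$-divisor on $\tilde{S}_T$ satisfying $\varphi^*(C)\cdot E_k=0$ for every exceptional curve $E_k$. Writing
\[
\varphi^*(C)=\tilde C+\sum_j a_jE_j,
\]
the orthogonality conditions become the linear system $\sum_j a_j(E_j\cdot E_k)=-(\tilde C\cdot E_k)$, which has unique solution $a_j=-\sum_k D_{jk}(\tilde C\cdot E_k)$, where $D_{jk}$ is the inverse of the (negative definite) intersection matrix $(E_j\cdot E_k)$. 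The projection formula gives $C^2=\varphi^*(C)^2$, and using $\varphi^*(C)\cdot E_j=0$ I would compute
\[
\varphi^*(C)^2=\varphi^*(C)\cdot\tilde C=\tilde C^2+\sum_j a_j(E_j\cdot\tilde C)=\tilde C^2-\sum_{j,k}(E_j\cdot\tilde C)D_{jk}(E_k\cdot\tilde C),
\]
which proves the first identity.

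For the specialization, I would recall that the minimal resolution of an $A_{k_j}$ singularity is a chain $E_1^{(j)},\dots,E_{k_j}^{(j)}$ of smooth rational curves with self-intersection $-2$ and consecutive intersection $1$. A direct inversion of this tridiagonal matrix (or an induction on $k_j$) yields
\[
D_{pq}^{(j)}=-\frac{\min(p,q)\bigl(k_j+1-\max(p,q)\bigr)}{k_j+1}.
\]
Since $C$ is a smooth curve passing through the quotient singularity $\frac{1}{k_j+1}(1,k_j)$ along one of the two coordinate axes fixed by the cyclic action (coming from the coordinate description of $\mathcal B_{S_0}$ on $S_1$ and mirrored by $\hat{\mathcal B}_{S_0}$ on $S_T$), the proper transform $\tilde C$ meets the chain transversally at a single point on one end curve, say $E_1^{(j)}$. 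Thus $(E_p^{(j)}\cdot\tilde C)=\delta_{p1}$, and the contribution of the $j$-th chain to the double sum reduces to $D_{11}^{(j)}=-\frac{k_j}{k_j+1}$. Summing over $j$ and negating gives the claimed $+\sum_j\frac{k_j}{k_j+1}$.

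The main obstacle is the transversality statement: that $\tilde C$ meets each $A_{k_j}$ chain at exactly one end curve at a single reduced point. I would justify this by working in local coordinates on $S_T$ adapted to the cyclic action — the curve $C$ is cut out inside the quotient by one of the semi-invariant coordinates, and the standard toric resolution of $\frac{1}{k_j+1}(1,k_j)$ shows that its strict transform crosses only the first (or last) rational curve of the chain, transversally. The other ingredients (projection formula, $\mathbb{Q}$-Cartier pullback for quotient singularities, inversion of the $A_n$ Cartan-type matrix) are entirely standard.
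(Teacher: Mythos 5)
Your proposal is correct and follows essentially the same route as the paper: pull back $C$ as a $\mathbb{Q}$-divisor orthogonal to the exceptional curves, solve for the coefficients via the inverse intersection matrix, use the block-diagonal structure to isolate each singularity, and note that $\tilde C$ meets only one end curve of each $A_{k_j}$ chain so the contribution is $(E(A_{k_j})^{-1})_{1,1}=-\tfrac{k_j}{k_j+1}$. The only difference is cosmetic — you invert the full $A_n$ matrix and spell out the transversality via toric coordinates, where the paper asserts the single transverse intersection and computes the $(1,1)$ entry by cofactors.
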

\begin{proof}
By the projection formula, we have  $\varphi^*(C).E_k=0$ for every exceptional divisor $E_k$. This implies 
$$
C^2=\varphi^*(C).\left( \tilde{C}+\sum_j a_jE_j \right)= \tilde{C}^2+\sum_ja_j(E_j.\tilde{C})
$$
On other hand $\varphi^*(C).E_k=0$ implies
$ \sum_{j}a_jE_j.E_k =-\tilde{C}.E_k$ where $E_j.E_k$ is the intersection matrix. Then
$$
a_j=\sum_k D_{jk}(-\tilde{C}.E_k)
$$
where $D_{jk}$ is the inverse of the intersection matrix $E_jE_k$. The previous expressions imply
$$
C^2=\tilde{C}^2+\sum_j \left( \sum_k D_{jk}(-\tilde{C}.E_k)    \right)E_j.\tilde{C}
=
\tilde{C}^2-\sum_{j,k}(E_j.\tilde{C})D_{jk}(E_k.\tilde{C}) 
$$
Let $T_i$ be the cyclic quotient singularities of $S_T$ supported on the curve $C$; and 
let $E(T_i)$ be the intersection matrix of each singularities $T_i$. Then, 
\begin{align*}
D_{j,k} =
 \begin{pmatrix}
E(T_1)^{-1} &   \cdots           & 0 \\
  0                 & E(T_2)^{-1}     & 0  \\
  \vdots      & \vdots              & \vdots  \\
                 0 & \cdots              & E(T_m)^{-1}
 \end{pmatrix}
\end{align*}
In particular, the exceptional divisors associated to each $T_i$ do not intersect. Therefore, we can consider the contribution of each singularity $T_i$ independently:
$$
\sum_{j,k}(E_j.\tilde{C})D_{jk}(E_k.\tilde{C}) 
=
\sum_{E_j , E_k\in R_1}(E_j.\tilde{C})(E(T_1)^{-1})_{jk}(E_k.\tilde{C}) 
+
\ldots
+
\sum_{E_j , E_k\in R_m}(E_j.\tilde{C})(E(T_m)^{-1})_{jk}(E_k.\tilde{C}) 
$$  
where $E_j \in R_i$ means that $E_j$ is an exceptional divisor associated to $T_i$. 
Only the first exceptional divisor of the resolution of $T_i$ intersects the curve $\tilde{C}$ at a point. Therefore,
$$
\sum_{E_j , E_k\in R_i}(E_j.\tilde{C})(E(T_i)^{-1})_{jk}(E_k.\tilde{C}) 
=
\left( E(T_i)^{-1} \right)_{1,1}
$$
If $T_i$ is an $A_k$ singularity, then by the configuration of its exceptional curves and by writing $D_{jk}$ in terms of the matrix of cofactors, we have: 
$$
(E(A_k)^{-1})_{1,1} = -\frac{k}{k+1}. 
$$  
From which our statement follows.
\end{proof}

The following is a well known result on degeneration of surfaces.
\begin{lemma}\label{tpf}
Let $Y \to \Delta$ be an one dimensional degeneration of surfaces such that $Y_t$ is smooth and  $Y_0 = \sum_i n_iV_i$.  Denote by $C_{ij} $ the double curve $V_i|_{V_j} \subset V_i$, and the triple point intersection 
$
T_{ijk} = V_i \cap V_j \cap V_k
$. Then, we have
\begin{align*}
N^{n_i}_{V_i | Y}
&=
\mathcal{O}_{V_i} \left(  - \sum_{j \neq i} n_j C_{ij} \right)
\\
V_i^2V_j 
&= C_{ji}^2
\\
n_jC_{ij}^2+n_iC_{ji}^2 
&=- \sum_{k \neq i,j} n_k | T_{ijk} |
\end{align*}
\end{lemma}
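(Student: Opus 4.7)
The plan is to deduce all three identities from a single fact, namely that the normal bundle of the central fiber in $Y$ is trivial, together with standard intersection-theoretic bookkeeping.

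Since $Y_0$ is the scheme-theoretic pullback of the origin on $\Delta$, one has $\mathcal{O}_Y(Y_0) \cong \mathcal{O}_Y$. Restricting to $V_i$ and using the decomposition $Y_0 = n_i V_i + \sum_{j \neq i} n_j V_j$, together with the identifications $\mathcal{O}_Y(V_i)|_{V_i} = N_{V_i|Y}$ and $\mathcal{O}_Y(V_j)|_{V_i} = \mathcal{O}_{V_i}(C_{ij})$ for $j \neq i$ (the latter because $V_i \cap V_j$ is the double curve viewed inside $V_i$), gives
\[
\mathcal{O}_{V_i} \;\cong\; N_{V_i|Y}^{\,n_i} \otimes \mathcal{O}_{V_i}\Bigl(\sum_{j \neq i} n_j C_{ij}\Bigr),
\]
which is precisely (1).

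For (2) I would compute the triple intersection $V_i^2 \cdot V_j$ on $Y$ by restricting one factor of $V_i$ to $V_j$: this yields the curve $C_{ji} = V_i|_{V_j}$, and a second restriction replaces the remaining $V_i$ by the line bundle $\mathcal{O}_Y(V_i)|_{V_j} = \mathcal{O}_{V_j}(C_{ji})$, whose degree on $C_{ji}$ is $C_{ji}^2$ computed inside $V_j$. This is purely formal from the associativity of intersection.

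For (3), which is the substantive identity, I would restrict formula (1) to the double curve $C_{ji} \subset V_j$ (after applying (1) with indices renamed so that $V_j$ plays the role of $V_i$) and take degrees. On the right hand side, the $k=i$ term contributes $-n_i\, C_{ji}^2$ (self-intersection on $V_j$), while each $k \neq i,j$ contributes $-n_k\,(C_{jk} \cdot C_{ji})_{V_j} = -n_k |T_{ijk}|$, since the two double curves meet transversally at the triple points. The degree of the left hand side $N_{V_j|Y}|_{C_{ji}}$ can instead be computed by restricting $\mathcal{O}_Y(V_j)$ to the same curve viewed inside $V_i$, giving $C_{ij}^2$ on $V_i$; multiplying by $n_j$ and equating with the right hand side produces $n_j C_{ij}^2 + n_i C_{ji}^2 = -\sum_{k \neq i,j} n_k |T_{ijk}|$.

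The only real point of care is tracking whether a given self-intersection number is taken on $V_i$ or on $V_j$, since the two components induce genuinely different line bundles on the common curve; once this is done the argument is bookkeeping, and I expect no serious obstacle.
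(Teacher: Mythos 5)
Your argument is correct, but there is nothing in the paper to compare it against: Lemma \ref{tpf} is introduced there with the sentence ``The following is a well known result on degeneration of surfaces'' and no proof is given --- it is the classical triple point formula (Persson, Friedman). What you have written is the standard derivation of that formula, and it is sound: triviality of $\mathcal{O}_Y(Y_0)$ restricted to $V_i$ gives the first identity, associativity of the triple product gives the second, and restricting the first identity (applied to $V_j$) to the double curve and computing the degree of $N_{V_j|Y}$ on that curve from the $V_i$ side gives the third. Two small points are worth making explicit. First, the whole computation tacitly assumes $Y$ is smooth (so that each $\mathcal{O}_Y(V_k)$ is a line bundle and $\mathcal{O}_Y(V_j)|_{V_i}=\mathcal{O}_{V_i}(V_i\cap V_j)$) and that $Y_0$ has normal crossings, so that the double curves inside $V_j$ meet transversally at the triple points; these hypotheses are only implicit in the statement, and you use them at exactly the right places. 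Second, the paper's definition of $C_{ij}$ as ``$V_i|_{V_j}\subset V_i$'' is internally inconsistent; your convention --- $C_{ij}$ is the double curve with self-intersection taken in $V_i$ --- is the one under which all three displayed formulas are true (in particular $V_i^2V_j=C_{ji}^2$ and the symmetric form of the triple point formula), and your care in tracking which surface each self-intersection is computed on is precisely the point that makes the bookkeeping close up.
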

On the Expression \ref{sumAk}, we need to find the value of $\tilde{C}^2$. That is the purpose of the following result.
\begin{lemma}\label{tC2}
Let $Y$ be a smooth model of the unipotent degeneration $ X \to \Delta$, so its central fiber has reduced components $Y|_0 = \sum V_i$.  Let $\tilde{S}_T$ be the proper transform of $S_T$ on $Y|_0$, and  let $\tilde{C}$ be the proper transform of $C=S_1 \big|_{S_T}$ on $\tilde{S}_T$.  Consider the set of singularities 
$$
B_{S_0} = \left\{ \frac{1}{\alpha_k}(1, \beta_k)  \right\}
$$
 Then, it holds
$$
\tilde{C}^2=\frac{\deg_{\tilde{w}}(S_0)}{w_0w_1w_2} + \sum_{k} \frac{\beta_k}{\alpha_k} - n_p
$$
where $n_p$ is the number of cyclic quotient singularities supported on $C$. In particular, for  Fuchsian hypersurface singularities we have:
$$
\tilde{C}^2=\frac{\deg_{\tilde{w}}(S_0)}{w_0w_1w_2} + \sum_{k} \frac{1}{k+1} - n_p
$$
\end{lemma}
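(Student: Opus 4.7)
The plan is to apply Lemma \ref{tpf} on a smooth model $Y$ of the unipotent degeneration $X\to\Delta$ and then compare the resulting identity with the value of $\hat{C}^2$ from Example \ref{bf}. I write the reduced central fiber of $Y$ as $Y|_0 = \tilde{S}_1+\tilde{S}_T+\sum_k V_k$, where the $V_k$ are the exceptional components that appear when one resolves the cyclic quotient threefold singularities of $\tilde{X}$ sitting along $C$; the vanishing $Y|_0\cdot\tilde{S}_1\cdot \tilde{S}_T = 0$ on the smooth threefold $Y$, together with Lemma \ref{tpf}, gives
\[
\tilde{C}^2 + \hat{C}^2 \;=\; -\sum_k \bigl|T_{\tilde{S}_1,\tilde{S}_T,V_k}\bigr|,
\]
where $\hat{C}$ is the proper transform of $(S_0\setminus 0)/\mathbb{C}^*$ on the minimal resolution of $S_0$, exactly as in Example \ref{bf}. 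Substituting $-\hat{C}^2 = \frac{\deg_{\tilde{w}}(f)}{w_0w_1w_2} + \sum_k \frac{\beta_k}{\alpha_k}$ from Example \ref{bf} then reduces the lemma to the statement $\sum_k \bigl|T_{\tilde{S}_1,\tilde{S}_T,V_k}\bigr|=n_p$.

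To verify this count I work locally near each cyclic quotient singularity $p\in C$, which is an orbifold double normal crossing $(xy=0)\subset \frac{1}{r}(1,-1,c)$. A toric resolution of $\frac{1}{r}(1,-1,c)$ subdivides the cone $\sigma=\langle e_1,e_2,e_3\rangle$ by adding rays in its interior, and the combinatorial fact I will use is that any triangulation of the $2$-simplex $\{e_1,e_2,e_3\}$ obtained by adding only interior vertices has a unique triangle containing the edge $e_1e_2$. Translated to fans, exactly one $3$-cone of the refinement contains both $e_1$ (the ray of $\tilde{S}_T=\{x=0\}$) and $e_2$ (the ray of $\tilde{S}_1=\{y=0\}$), which produces exactly one triple point $\tilde{S}_1\cap\tilde{S}_T\cap V_k$ over $p$. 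Away from $\operatorname{Sing}(\tilde{X})$, the two components meet transversely along $C$ and contribute no triple points, so summing over the singular points on $C$ yields the total count $n_p$. The Fuchsian specialization is then immediate: if $\mathcal{B}_{S_0}=\{\frac{1}{\alpha_k}(1,1)\}$ then $\beta_k=1$, and the $A_k$-indexing $\alpha_k=k+1$ turns $\sum_k \beta_k/\alpha_k$ into $\sum_k 1/(k+1)$.

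The main obstacle will be rigorously justifying the triple-point count: one must both produce a suitable smooth model $Y$ with reduced central fiber (via a semistable reduction argument compatible with the toric structure on $\tilde{X}$) and check that the local resolution of each orbifold double normal crossing contributes exactly one triple point incident to both $\tilde{S}_1$ and $\tilde{S}_T$, independently of the specific continued-fraction refinement used.
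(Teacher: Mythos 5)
Your proof is correct and follows essentially the same route as the paper's: apply Lemma \ref{tpf} to the reduced central fiber of $Y$ to get $\tilde{C}^2+\hat{C}^2=-\sum_k|T_{ijk}|$, substitute the Orlik--Wagreich value of $-\hat{C}^2$ from Example \ref{bf}, and identify the number of triple points with $n_p$. The only difference is that you supply a local toric argument (one interior-vertex triangle on the edge $e_1e_2$ per orbifold double normal crossing) for the count $\sum_k|T_{ijk}|=n_p$, a step the paper simply asserts, so your write-up is if anything more complete on that point.
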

\begin{proof}
Let $\tilde{S}_1$ be the proper transform of $S_1$ on $Y \big|_0$ then it holds that 
$\tilde{S}_1 \cap \tilde{S}_T$ support $n_p$ triple points where the extra surfaces are exceptional divisors of the cyclic quotient singularities supported on $C$. 
Let $\hat{C}$ be the proper transform of $C$ on  $\tilde{S}_1$, by Lemma \ref{tpf}, it holds that:
$$
\tilde{C}^2+\hat{C}^2
=- \sum_{k \neq i,j} | T_{ijk} |  = -n_p
$$
By \cite[Thm 3.6.1]{orlik1971isolated} we have that 
$$
- \hat{C}^2 =  \frac{ \deg_{\mathbf{w}}(f)}{w_0w_1w_2} + \sum_k \frac{\beta_k}{\alpha_k}
$$
so the statement follows from it. In the case of Fuchsian hypersurface singularities $\beta_k=1$ (see Example \ref{fsh}). 
\end{proof}

\section{Higher modality singularities}\label{hm}

Next, we discuss several examples  of singularities which behaviour is different to the unimodal ones.  We draw our examples from minimal elliptic singularities.

\subsection{ The $V_{18}'$ singularity} It is also known as  $4A_{1,-2,o}$; this is a minimal elliptic singularity with Milnor number $18$ and modality $4$.  Its normal form is $x^3+y^4+z^4$, and its set of cyclic quotient singularities is 
$\mathcal{B}_{V_{18}'} = 4A_2 $. There are two dual sets such that can be realized by a weighted blow up:
\begin{align*}
\mathcal{\hat{B}}^1_{V_{18}'} = 4 \times \frac{1}{3}(1,1) 
& &
\mathcal{\hat{B}}^2_{V_{18}'} = 4 \times \frac{1}{3}(1,2)
\end{align*}
By Theorem \ref{thm1}, the  smoothing associated to the dual set $\mathcal{\hat{B}}^1_{V_{18}'}$ is  realized by a linear combination of the monomials of weight $12$ with respect the weights 
$\textbf{v} = (4,3,3,1)$.  This is the unipotent smoothing, the threefold $\tilde{X}$ has singularities $\frac{1}{3}(1,1,2)$, and the exceptional surface is $S_{12} \subset \mathbb{P}(4,3,3,1)$.   

On other hand, the smoothing associated to the dual set  $\mathcal{\hat{B}}^2_{V_{18}'}$ is realized by a linear combination of the monomials of degree 12 with respect the weights $\textbf{u}=(4,3,3,2)$. This threefold has a strictly log canonical singularity, and it corresponds to 2nd case on Yonemura's classification. The exceptional surface is the normal $K3$ surface $S_{12} \subset \mathbb{P}(4,3,3,2)$ with singularities   $4A_2 + 3A_1 $.

\subsection{The   $E_{20}$  singularity.} \label{e20}
It is also known as $E_{8,-3}$. It is a minimal elliptic singularity with Milnor number $20$ and modality $2$.  Its normal form is $x^2+y^3+z^{11}$ and its set of cyclic quotient singularities is 
$$
\mathcal{B}_{E_{20}}=\left\{
\frac{1}{2}(1,1), \frac{1}{3}(1,2), \frac{1}{11}(1,9)
\right\}
$$
This set has four sets of dual singularities.  However, only  two of them have an associated smoothing (see Example \ref{notunq}).
\begin{enumerate}
\item The dual set:
$$
\mathcal{\hat{B}}^1_{E_{20}}=\left\{
\frac{1}{2}(1,1), \frac{1}{3}(1,1), \frac{1}{11}(1,10)
\right\}
$$
which associated smoothing $X_1 \to \Delta$ is realized by a linear combination of the monomials of degree $66$ with respect  the weights $\textbf{v}=(33,22,6,5)$. The threefold $X_1 \subset  \mathbb{C}^4$ has a strictly log canonical singularity and it correspond to 46th case  of Yonemura's classification. The exceptional surface is the normal K3 surface 
$
S_{66} :=
(x^2+y^3+z^{11}+t^{12}z=0) 
 \subset \mathbb{P}(33,22,11,5)
$
\item The smoothing associated to the dual set
$$
\mathcal{\hat{B}}_{E_{20}}=\left\{
\frac{1}{2}(1,1), \frac{1}{3}(1,1), \frac{1}{11}(1,2)
\right\}
$$
is the unipotent one. In that case, the surface
$
S_{66} \subset \mathbb{P}(33,22,6,1) 
$
is an non minimal K3 surface (see also \cite[Sec 3.3.3]{hunt1999k3})
\end{enumerate}

\bibliographystyle{plain}
\bibliography{nsfRef.bib} 
\end{document}